\theoremstyle{plain}
    \newtheorem{thm}{Theorem}[section]
     \newtheorem{conjecture}[thm]{Conjecture}
    \newtheorem{corollary}[thm]{Corollary}
    \newtheorem{lemma}[thm]{Lemma}
    \newtheorem{proposition}[thm]{Proposition}
    \newtheorem{question}[thm]{Question}
    \newtheorem{theorem}[thm]{Theorem}
\theoremstyle{definition}
\newtheorem{blank}[thm]{}
    \newtheorem{notation}[thm]{Notation}
    \newtheorem*{notation*}{Notation and Terminology}
    \newtheorem{remark}[thm]{Remark}
    \newtheorem*{ack}{Acknowledgments}
\theoremstyle{remark}
\newcommand{\Q}{\mathbb{Q}}
\newcommand{\R}{\mathbb{R}}
\newcommand{\id}{\operatorname{id}}
\newcommand{\NE}{\overline{\operatorname{NE}}}
\newcommand{\Nef}{\operatorname{Nef}}
\newcommand{\NS}{\operatorname{NS}}
\newcommand{\PE}{\operatorname{PE}}
\newcommand{\Supp}{\operatorname{Supp}}
\newcommand{\N}{\operatorname{N}}
\newcommand{\Sym}{\operatorname{Sym}}
\newcommand{\Pic}{\operatorname{Pic}}
\newcommand{\Rmnum}[1]{\expandafter\@slowromancap\romannumeral #1@}
\begin{document}

\title[Threefolds admitting polarized endomorphisms]
{Log Calabi-Yau structure of projective threefolds admitting polarized endomorphisms}

\author{Sheng Meng}

\address{
    \textsc{School of Mathematical Sciences, Shanghai Key Laboratory of PMMP}\endgraf
    \textsc{East China Normal University, 500 Dongchuan Road, Shanghai 200241, People's Republic of China;}\endgraf
	\textsc{Korea Institute For Advanced Study,
		Seoul 02455, Republic of Korea}
}
\email{smeng@math.ecnu.edu.cn; ms@u.nus.edu}

\subjclass[2020]{
14E30,   
32H50, 
20K30 
}

\keywords{polarized endomorphism, Calabi-Yau type, equivariant minimal model program, ramification divisor}

\maketitle

\begin{center}
{\em Dedicated to Professor De-Qi Zhang on his sixtieth birthday}
\end{center}

\begin{abstract}
Let $X$ be a normal projective variety admitting a polarized endomorphism $f$, i.e., $f^*H\sim qH$ for some ample divisor $H$ and integer $q>1$.
It was conjectured by Broustet and Gongyo that $X$ is of Calabi-Yau type, i.e., $(X,\Delta)$ is lc for some effective $\Q$-divisor such that $K_X+\Delta\sim_{\Q} 0$.
In this paper, we establish a general guideline based on the equivariant minimal model program and the canonical bundle formula.
In this way, we prove the conjecture when $X$ is a smooth projective threefold.
\end{abstract}

\tableofcontents

\section{Introduction}

We work over an algebraically closed field of characteristic 0.
Let $f:X\to X$ be a surjective endomorphism of a normal projective variety $X$.
In the curve case, it is well known by the Hurwitz formula that $X$ is either a rational curve or an elliptic curve when $f$ is non-isomorphic.
This is equivalent to saying that the anticanonical divisor $-K_X$ is effective.
In higher dimensional case, to easily eliminate the distraction of automorphism, one focuses on the {\it polarized} endomorphism $f$, i.e., $f^*H\sim qH$ for some ample divisor $H$ and integer $q>1$.
Then by making use of the ramification divisor formula, Zhang and the author showed that $-K_X$ is effective when $X$ is $\Q$-Gorenstein. 
However, the effectivity of the anticanonical divisor, though important, can say very few on the detailed characterization of higher dimensional varieties.

A delicate operation is running the $f$-equivariant (after iteration) minimal model program. The smooth surface case is settled by Nakayama and the general higher dimensional situation is settled by Zhang and the author (cf.~\cite{Nak02},\cite{MZ18},\cite{MZ22},\cite{MZ20},\cite{CMZ20}).
In this way, Broustet and Gongyo \cite{BG17} proposed the following conjecture and proved the surface case.
Recall that a normal projective variety $X$ is of {\it Calabi-Yau type} if $(X,\Delta)$ is an lc pair for some effective Weil $\Q$-divisor $\Delta$ such that $K_X+\Delta\sim_{\Q} 0$.
By the abundance (cf.~\cite[Theorem 1.2]{Gon13}), the latter condition is equivalent to $K_X+\Delta\equiv 0$.
We also call the pair $(X, \Delta)$ {\it log Calabi-Yau}.

\begin{conjecture}\label{main-conj-cy}
Let $X$ be a normal projective variety admitting a polarized endomorphism.
Then $X$ is of Calabi-Yau type.
\end{conjecture}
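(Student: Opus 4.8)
The plan is to reduce Conjecture \ref{main-conj-cy} for a smooth projective threefold $X$ to a small list of ``building block'' cases via the $f$-equivariant minimal model program, and then to verify those building blocks using the known structure of polarized endomorphisms together with the canonical bundle formula. After replacing $f$ by an iterate, we may run an $f$-equivariant MMP on $X$; since $-K_X$ is pseudo-effective (indeed effective by the Meng--Zhang result quoted in the introduction), this MMP cannot terminate with a minimal model of non-negative Kodaira dimension, so it terminates with a Mori fibre space $X' \to Y$, where $X'$ still carries a polarized endomorphism $g$ descending to a polarized endomorphism of $Y$, and $\dim Y \in \{0,1,2\}$. The first step is therefore to check that being of Calabi-Yau type is preserved under the inverse operations: if $X \dashrightarrow X'$ is a step of an equivariant MMP (a divisorial contraction or flip) and $X'$ is of Calabi-Yau type, then so is $X$. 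This is a standard discrepancy computation — pulling back $K_{X'}+\Delta' \sim_\Q 0$ and adding the (effective, by negativity) exceptional correction keeps lc and log Calabi-Yau — so the real content is the analysis of the end products.

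Next I would dispose of the low-dimensional bases. If $\dim Y = 0$ then $X'$ is a (klt, hence log terminal) Fano threefold, and Fano varieties are of Calabi-Yau type by general elephant–type results (a general member of $|-K_{X'}|$ after adjusting coefficients; more robustly, one invokes the boundedness/complement results for Fanos), so this case is immediate. If $\dim Y = 2$ then the generic fibre is $\PP^1$ and $Y$ is a smooth projective surface with a polarized endomorphism, hence by the Broustet--Gongyo surface case $Y$ is of Calabi-Yau type, say $K_Y + \Delta_Y \sim_\Q 0$ with $(Y,\Delta_Y)$ lc; one then runs the canonical bundle formula backwards, writing $K_{X'} \sim_\Q \pi^*(K_Y + B_Y + M_Y)$ and producing an effective $\Q$-divisor on $X'$ that restricts suitably to fibres, to lift the log Calabi-Yau structure. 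The case $\dim Y = 1$ is analogous but with $Y$ a smooth curve admitting a non-isomorphic endomorphism, hence $Y \cong \PP^1$ or an elliptic curve, i.e. $Y$ is itself log Calabi-Yau; here the fibres are del Pezzo surfaces (possibly singular, with mild singularities) and again one needs the generic fibre to be of Calabi-Yau type and then glue via the canonical bundle formula.

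The technical heart — and the step I expect to be the main obstacle — is the gluing via the \emph{inverse} canonical bundle formula: given a fibration $\pi : X' \to Y$ with $K_{X'} \sim_\Q \pi^* D$ (after the MMP, $X'$ is a Mori fibre space, so relative Calabi-Yau up to the base) and a log Calabi-Yau structure $(Y,\Delta_Y)$ downstairs, one must produce an effective $\Delta'$ on $X'$ with $(X',\Delta')$ lc and $K_{X'}+\Delta' \sim_\Q 0$. The subtlety is controlling the discriminant (the ``horizontal'' boundary coming from multiple fibres and from the singularities of the total space over codimension-one points of $Y$) and the moduli part $M_Y$: one must check $M_Y$ is $\Q$-linearly trivial, or absorb it into $\Delta_Y$, which is where the hypothesis of a polarized endomorphism is essential — it forces $M_Y$ (a polarized-semiample nef class that is also contracted by $f^*$ acting as multiplication by $q$) to be torsion, hence trivial, and simultaneously pins down the multiple-fibre locus. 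I would handle this by combining the behaviour of the discriminant and moduli divisors under the equivariant base change induced by $f$ (they are permuted up to the $q$-scaling) with Ambro's and Fujino--Gongyo's results on the canonical bundle formula, and then do the relative-elephant construction fibrewise on the del Pezzo / conic fibres. Once all three base dimensions are settled and the MMP steps are checked to preserve the property in reverse, Conjecture \ref{main-conj-cy} follows for smooth threefolds.
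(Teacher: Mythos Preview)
Your outline has the right shape (equivariant MMP, then a canonical bundle formula argument on the resulting fibration), but several steps are wrong as stated and the main difficulty is entirely absent. First, lifting the log Calabi-Yau property \emph{up} a divisorial contraction is not ``a standard discrepancy computation'': if $K_X=\pi^*K_{X'}+aE$ with $a>0$, pulling back $K_{X'}+\Delta'\sim_\Q 0$ gives a non-effective boundary on $X$. The paper never goes back up the MMP step by step; it works with the composite $\pi:X\to Y$ and shows that the \emph{specific} pair $(X,\frac{R_f}{q-1})$ is lc (Proposition~\ref{prop-induction-birational} is what makes this boundary behave correctly under equivariant birational maps). Second, the end product $Y$ of the equivariant MMP of \cite{MZ18} is $Q$-abelian, not an arbitrary base: when $\dim Y=1$ the base is an elliptic curve (never $\PP^1$), and when $\dim Y=2$ one has $K_Y\equiv 0$ automatically, which is precisely what makes the canonical bundle formula force the discriminant $B=0$. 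Third, the $\dim Y=0$ case is not handled by general-elephant arguments on a singular Fano output; it uses Yoshikawa's theorem that a smooth rationally connected variety with a polarized endomorphism is of Fano type, applied to the original $X$.

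The decisive gap is $\dim Y=1$ with $\PP^2$ fibres. Conjecture~\ref{main-conj-lc} for $\PP^2$ is \emph{open}, so your ``generic fibre is log Calabi-Yau, then glue'' step has no input. The bulk of the paper (Sections~4--6) is spent circumventing exactly this: one proves Conjecture~\ref{main-conj-lc} for all surfaces with $\rho>1$ and for $\PP^2$ when $T_f\neq\emptyset$; one disposes of the case $-K_X$ not big via the anticanonical Iitaka fibration and a delicate analysis of the induced surface over a general fibre; and in the remaining case one reduces to $X\cong\PP_Y(\mathcal F_2\oplus\mathcal L)$ with $\deg\mathcal L<0$ and computes $h^0(X,nD)=1$ directly to force an $f^{-1}$-periodic divisor, contradicting $T_f=\emptyset$. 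None of this structure --- which is where the actual work lies --- appears in your sketch.
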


In higher dimensional cases, Conjecture \ref{main-conj-cy} has been recently verified for rationally connected smooth projective varieties by Yoshikawa; see \cite{Yos20} or Theorem \ref{thm-yos}.
The main purpose of this paper is to give a partial guideline on Conjecture \ref{main-conj-cy} and to provide a full solution for the case of smooth projective threefolds.
The following is our main result.

\begin{theorem}\label{main-thm-lcy}
Let $X$ be a smooth projective threefold admitting a polarized endomorphism $f$.
Then $X$ is of Calabi-Yau type, i.e., $(X,\Delta)$ is an lc pair for some effective $\Q$-divisor $\Delta$ with $K_X+\Delta\sim_{\Q} 0$.
\end{theorem}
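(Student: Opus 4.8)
\emph{Sketch of the proposed proof.} The plan is to use the equivariant minimal model program to reduce $X$ to a Mori fibre space with rationally connected fibres over a lower-dimensional base, to build a log Calabi--Yau structure on that model by combining Yoshikawa's theorem with the canonical bundle formula, and then to descend this structure to $X$ using the arithmetic of the ramification divisor. First, after replacing $f$ by a suitable power one may run an $f$-equivariant $K_X$-MMP \cite{MZ18,MZ19,CMZ20}. By the effectivity of the anticanonical divisor for polarized endomorphisms \cite{MZ18,MZ20}, either $K_X\equiv 0$, in which case $(X,0)$ is already log Calabi--Yau; or $X$ is rationally connected, in which case Theorem~\ref{thm-yos} applies; or $-K_X$ is a nonzero effective class and $X$ is not rationally connected. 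In the last case $K_X$ is not pseudo-effective, so the program ends with an $f$-equivariant birational contraction $\tau\colon X\dashrightarrow X'$ onto a $\Q$-factorial terminal threefold, together with an $f$-equivariant Mori fibre space $\pi\colon X'\to Y$ where $1\le\dim Y\le 2$ (the base is positive-dimensional, for otherwise $X'$ would be Fano and $X$ rationally connected), and $f$ descends to a polarized endomorphism $g$ of $Y$.

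Next I analyze the fibration $\pi$. The base $Y$ is of Calabi--Yau type: if $\dim Y=1$ it carries a non-isomorphic polarized endomorphism and cannot be $\mathbb{P}^1$ (a del Pezzo fibration over a rational curve would be rationally connected), hence is an elliptic curve; if $\dim Y=2$ it is of Calabi--Yau type by the surface case \cite{Nak02,BG17}. The general fibre of $\pi$ is a smooth del Pezzo surface (resp.\ $\mathbb{P}^1$), hence of Fano type, hence of Calabi--Yau type. Since $-K_{X'}$ is $\pi$-ample, $X'$ is of Fano type over $Y$, and the canonical bundle formula for $\pi$ produces an effective $\Q$-divisor $\Delta'$ with $(X',\Delta')$ log canonical, $K_{X'}+\Delta'\sim_{\Q,\pi}0$, restricting to a log Calabi--Yau structure on the general fibre, together with $K_{X'}+\Delta'\sim_{\Q}\pi^*(K_Y+B_Y+M_Y)$ where $B_Y$ is the discriminant part and $M_Y$ the nef moduli part; using that $Y$ is of Calabi--Yau type, and that $f$-equivariance makes all the classes involved $g^{*}$-eigenclasses of eigenvalue $q$, one arranges $B_Y$ to be a log Calabi--Yau boundary on $Y$. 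It then suffices to show $M_Y\sim_{\Q}0$, and this holds because every fibration that occurs has fibres of dimension at most two: for $\dim Y=2$ the fibres are $\mathbb{P}^1$ and $\Delta'$ may be chosen with genus-zero restriction of trivial moduli, while for $\dim Y=1$ the del Pezzo fibration becomes isotrivial after a further iteration of $f$. Hence $X'$ is of Calabi--Yau type. The hard part in general, and the reason Conjecture~\ref{main-conj-cy} remains open beyond these cases, is precisely the control of the moduli part $M_Y$ of the canonical bundle formula --- its semiampleness, or at least its $\Q$-triviality in the situation at hand; for threefolds this is accessible only because the relevant fibres live in dimension $\le 2$.

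Finally, I descend the structure $(X',\Delta')$ along $\tau$ back to the smooth threefold $X$. On a common resolution this reduces to checking that the discrepancy of $(X',\Delta')$ along every prime divisor $E\subset X$ contracted by $\tau$ is $\le 0$, for then the crepant pull-back of $K_{X'}+\Delta'$ to $X$ has effective boundary. Here the equivariance is essential: any such $E$ is totally invariant, $f^{-1}(E)=E$, so comparing the degree $q^{3}$ of $f$ with the degree $q^{2}$ of the induced polarized endomorphism $f|_E$ of the surface $E$ shows that the ramification index of $f$ along $E$ equals $q$, whence $E$ appears in the ramification divisor of $f$ with coefficient $q-1>0$; by the ramification calculus for polarized endomorphisms \cite{MZ18,MZ20}, this forces the structure $(X',\Delta')$ (after, if necessary, adjusting the choice of $\Delta'$ compatibly) to have a log canonical place along $E$, i.e.\ discrepancy exactly $-1$, which is precisely the inequality required. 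Therefore $X$ carries an effective $\Q$-divisor $\Delta$ with $(X,\Delta)$ log canonical and $K_X+\Delta\sim_{\Q}0$, so $X$ is of Calabi--Yau type.
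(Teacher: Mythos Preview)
Your outline follows the right overall architecture---equivariant MMP, canonical bundle formula, ramification analysis---but two of the steps you sketch are genuine gaps, not details to be filled in.

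The first is the claim that for $\dim Y=1$ ``the del Pezzo fibration becomes isotrivial after a further iteration of $f$,'' which you invoke to force $M_Y\sim_{\Q}0$. This is unjustified and is precisely where the paper does its hard work. The paper never argues isotriviality. Instead it takes the specific boundary $\frac{R_f}{q-1}$ and applies the canonical bundle formula (Proposition~\ref{prop-cbf-rf}): since $K_Y\equiv 0$, the discriminant $B\ge 0$ and $M$ is pseudo-effective, the relation $K_Y+B+M\equiv 0$ forces $B=0$, and then $(X,\frac{R_f}{q-1})$ is lc everywhere as soon as it is lc over the generic point. That last condition is Conjecture~\ref{main-conj-lc} for the general fibre. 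When the fibre has $\rho>1$ this is Theorem~\ref{thm-surf}; but when the fibre is $\mathbb{P}^2$ the conjecture is open, and the paper must instead combine a partial result requiring a totally invariant curve (Theorem~\ref{thm-p2-ticurve}, Corollary~\ref{cor-p2-ticurve}), a reduction when $-K_X$ is not big (Theorem~\ref{thm-kappa1}), and a case analysis of rank-$3$ bundles on an elliptic curve (Steps~2--4 of the proof, together with Lemma~\ref{lem-f2}) to eliminate the residual case $X\cong\mathbb{P}_Y(\mathcal{F}_2\oplus\mathcal{L})$ with $\deg\mathcal{L}<0$. Nothing in your sketch substitutes for this.

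The second gap is the descent from $X'$ to $X$. You correctly note that any $\tau$-exceptional prime $E$ is totally invariant and appears in $R_f$ with coefficient $q-1$, but this shows only that $E$ is an lc place of the \emph{particular} pair $(X',\frac{R_{f'}}{q-1})$---that is exactly the content of Proposition~\ref{prop-induction-birational}. It says nothing about the discrepancy of $E$ with respect to an unrelated boundary $\Delta'$ produced by your canonical bundle formula argument; for a generic log Calabi--Yau structure on $X'$ that discrepancy can be strictly positive, and then the crepant pullback to $X$ fails to be effective. The paper sidesteps this entirely by never working on a singular model: the MMP of \cite{MZ18} already supplies a morphism $\pi\colon X\to Y$ from the smooth $X$ to a $Q$-abelian $Y$, and the boundary on $X$ is taken to be $\frac{R_f}{q-1}$ (when $\dim Y=1$) or $T_f+P$ with $P$ the $\Q$-movable part of $-(K_X+T_f)$ from Theorem~\ref{thm-move} (when $\dim Y=2$), whose lc-ness over the generic point is checked directly on the fibres. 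In the one remaining $\mathbb{P}^2$-bundle case one has $\rho(X)=2$, so $X$ is already the Mori fibre space and no lifting is needed.
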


We briefly explain the strategy and difficulty in our proof.
We first run the $f$-equivariant minimal model program which ends up with a $Q$-abelian variety $Y$, i.e., a quasi-\'etale quotient of an abelian variety.
Since the smooth rationally connected case has been verified by Yoshikawa, we may assume $\dim(Y)>0$.
Then we observe the fibration $\pi:X\to Y$ and its $f$-periodic general fibre.
By applying the canonical bundle formula and ramification divisor formula, the natural idea is to reduce the problem to the following Conjecture \ref{main-conj-lc} proposed by Yoshinori Gongyo.
\begin{conjecture}[Gongyo]\label{main-conj-lc}
Let $f:X\to X$ be a $q$-polarized endomorphism of a smooth projective variety $X$.
Then $(X,\frac{R_f}{q-1})$ is an lc pair after iteration.
\end{conjecture}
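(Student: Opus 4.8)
The plan is to reduce the statement to the log canonicity of an explicit, \emph{$f$-invariant}, numerically trivial pair, and then to control its singularities through the expanding dynamics of $f$. First I would fix the numerical picture. By the structure theory of polarized endomorphisms (\cite{MZ18, MZ19, MZ20}), after replacing $f$ by an iterate the pullback $f^*$ acts as multiplication by $q$ on $N^1(X)_\R$; in particular $f^*K_X\equiv qK_X$. Since $X$ is smooth, the ramification divisor formula gives the linear equivalence $K_X\sim f^*K_X+R_f$, so $R_f\equiv-(q-1)K_X$ and the boundary $\Delta:=\frac{R_f}{q-1}$ satisfies $\Delta\ge 0$ and $K_X+\Delta\equiv 0$. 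Thus Conjecture \ref{main-conj-lc} sharpens Conjecture \ref{main-conj-cy}: it predicts that the \emph{canonical} boundary already realizes $(X,\Delta)$ as a log Calabi--Yau pair. I would also exploit the phrase ``after iteration'': the conjecture for $f^n$ concerns $\Delta_n:=\frac{R_{f^n}}{q^n-1}=\frac{1}{q^n-1}\sum_{i=0}^{n-1}(f^i)^*R_f$, and since $(f^i)^*R_f\equiv q^iR_f$ these all lie in the numerical class $-K_X$ while differing by numerically trivial divisors.

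The engine is the self-similarity of the pair. A direct computation from $f^*K_X\sim K_X-R_f$ gives
\[
f^*(K_X+\Delta)-(K_X+\Delta)=\frac{f^*R_f-qR_f}{q-1}=:\frac{T}{q-1},\qquad T\equiv 0 .
\]
Hence $(X,\Delta)$ is $f^*$-invariant up to a numerically trivial divisor $T\in\Pic^0(X)_\Q$, and the sequence $\Delta_n$ is the orbit of $\Delta$ under the affine pullback dynamics. Because $f^*$ contracts by the factor $q$ transversally to $-K_X$, I would show that $\Delta_n$ converges to the unique $f$-invariant solution $\Delta_\infty$ of $(f^*-q)\Delta_\infty=(q-1)K_X+R_f$, obtained by inverting $f^*-q$ on the numerically trivial part (where it has no eigenvalue $q$, the eigenvalues there having modulus $\sqrt q$). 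This $\Delta_\infty$ satisfies the \emph{exact} relation $f^*(K_X+\Delta_\infty)\sim_\Q q(K_X+\Delta_\infty)$, so that its non-klt locus is genuinely totally invariant. The plan is then to prove $\Delta_\infty$ is effective and lc, and to transfer the conclusion back to the $\Delta_n$ that the conjecture asks about.

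For the log canonicity of the exactly invariant pair I would combine two tools. First, log canonicity is preserved under finite surjective pullback, so writing $K_X+\Delta'=f^*(K_X+\Delta_\infty)$ we have $(X,\Delta_\infty)$ lc if and only if $(X,\Delta')$ lc; the exact invariance makes $\Nklt(X,\Delta_\infty)$ totally $f$-invariant, and the scaling by $q>1$ should force the log discrepancies along any divisor over $\Nklt(X,\Delta_\infty)$ to strictly decrease under iterated pullback, contradicting that they are computed on a fixed resolution. Second, in parallel I would run the $f$-equivariant minimal model program (\cite{MZ18, MZ19, MZ20, CMZ20}) to reduce to building blocks: over the $Q$-abelian base $Y$ the map is quasi-\'etale, so $R_f$ vanishes in codimension one and the base contributes nothing; on an $f$-periodic general fibre $F$ one has $\Delta_\infty|_F=\frac{R_{f|_F}}{q-1}$, which is handled by induction on dimension and, in the rationally connected case, by Yoshikawa's theorem (Theorem \ref{thm-yos}); the canonical bundle formula $K_X+\Delta_\infty\sim_\Q\pi^*(K_Y+B_Y+M_Y)$ then glues fibre and base.

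I expect the main obstacle to lie exactly at the interface between these steps. Proving effectivity and lc-ness of $\Delta_\infty$ requires controlling the numerically trivial correction $T\in\Pic^0(X)$: its support can migrate under $f$, so the ``discrepancies strictly decrease'' heuristic must be upgraded to a uniform estimate that survives $T\neq0$, and the flips in the equivariant program obstruct any clean transport of the non-lc locus; the passage from $\Delta_\infty$ back to $\Delta_n$ is likewise delicate, since log canonicity need not be open along the convergence $\Delta_n\to\Delta_\infty$. Moreover Yoshikawa's theorem produces \emph{some} lc boundary on the fibre rather than the canonical one $\frac{R_{f|_F}}{q-1}$, and the moduli part $M_Y$ is only known to be nef and $\Q$-Cartier after a base change, so showing $(Y,B_Y+M_Y)$ is lc with a boundary matching the dynamics is itself a recursive instance of the conjecture. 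This coupling --- matching the canonical boundary across the canonical bundle formula while keeping control of $T$ through flips --- is, I believe, why the conjecture remains open in general and why the present paper establishes Theorem \ref{main-thm-lcy} by a more indirect route in dimension three.
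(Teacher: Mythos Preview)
The statement is \emph{Conjecture}~\ref{main-conj-lc} in the paper and is \emph{not} proved there; the introduction says explicitly that even the surface case remains open for $\mathbb{P}^2$. What the paper does establish are partial results: Theorem~\ref{thm-coefficient} bounds the coefficients of $\frac{R_f}{q-1}$ by $1$ after iteration (the codimension-one shadow of log canonicity), Theorem~\ref{thm-surf} settles smooth surfaces with $\rho(X)>1$, and Theorem~\ref{thm-p2-ticurve} handles $\mathbb{P}^2$ under the extra hypothesis $T_f\neq\emptyset$. There is thus no ``paper's own proof'' to compare against; your proposal is a sketch of attack on an open problem, and to your credit you flag its weak points in the final paragraph.

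Two of those weak points deserve sharper emphasis. First, the assertion that after iteration $f^*$ acts as $q\cdot\id$ on $\N^1(X)_{\R}$ is not a general theorem about polarized endomorphisms: all eigenvalues of $f^*|_{\N^1(X)}$ have modulus $q$, but in the paper the scalar action is invoked only \emph{after} an equivariant MMP reduction (see the proofs of Theorem~\ref{thm-surf} and of Theorem~\ref{main-thm-lcy}, Step~1), not on an arbitrary smooth $X$. Without it the identity $K_X+\Delta\equiv 0$ and your whole $\Delta_\infty$ convergence scheme are unavailable at the outset. Second, and more seriously, the heart of your argument --- that total invariance of $\Nklt(X,\Delta_\infty)$ together with the $q$-expansion forces log discrepancies to strictly decrease --- is circular as written. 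Setting $K_X+\Delta'=f^*(K_X+\Delta_\infty)$ gives $\Delta'=f^*\Delta_\infty-R_f$, and \cite[Proposition~5.20]{KM98} yields only that $(X,\Delta_\infty)$ is lc if and only if $(X,\Delta')$ is. Even under your exact invariance $f^*\Delta_\infty=q\Delta_\infty$ one finds $\Delta'=q\Delta_\infty-R_f$, so $\Delta'-\Delta_\infty=(q-1)\Delta_\infty-R_f\equiv 0$ has no sign and no discrepancy moves. Effectivity of the limiting $\R$-divisor $\Delta_\infty$ is likewise not addressed. The paper's successful partial cases proceed by entirely different, concrete methods: adjunction to an $f^{-1}$-invariant line on $\mathbb{P}^2$ combined with the Shokurov--Koll\'ar connectedness principle \cite{HH19}, and explicit local intersection computations on ruled surfaces via inversion of adjunction \cite[Corollary~5.57]{KM98}.
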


However, for the surface case, we cannot fully prove Conjecture \ref{main-conj-lc} for $\mathbb{P}^2$, which is the only left case.
So the main difficulty of proving Theorem \ref{main-thm-lcy} remains in the case when $\pi:X\to Y$ is a $\mathbb{P}^2$-bundle over an elliptic curve $Y$.
By applying the Iitaka fibration of the anticanonical divisor, we only need to focus on the case when $-K_X$ is big; see Section \ref{sec-big}.
This new condition allows us to reduce the problem to the only very concrete case when $X\cong\mathbb{P}_Y(\mathcal{F}_2\oplus \mathcal{L})$ where $\mathcal{F}_2$ is the unique indecomposable rank 2 vector bundle with non-trivial global sections and $\mathcal{L}$ is a line bundle of negative degree.

\begin{ack}
The author would like to thank professor Paolo Cascini for the warm hospitality and inspiring discussion during visiting Imperial Colledge London in 2018.
The author would like to thank Doctor Guolei Zhong and the referee for valuable suggestions to improve this paper.
The author is supported in part by Science and Technology Commission of Shanghai Municipality (No. 22DZ2229014) and by a Research Fellowship of KIAS (MG075501).
\end{ack}

\section{Preliminaries}

We use the following notation throughout this paper.
\begin{notation}\label{notation2.1}
Let $X$ be a projective variety.

\begin{itemize}
\item The symbols $\sim$ (resp.~$\sim_{\mathbb Q}$, $\equiv$)  denote
the \textit{linear equivalence} (resp.~\textit{$\mathbb Q$-linear equivalence}, \textit{numerical equivalence}) on $\Q$- (or $\R$-) Cartier divisors.
We also use $\equiv$ to denote the {\it numerical equivalence} of $1$-cycles on $X$.

\item Denote by $\textup{NS}(X) = \Pic(X)/\Pic^0(X)$  the {\it N\'eron-Severi group} of $X$.
Let
 $\N^1(X):=\NS(X)\otimes_\mathbb{Z}\mathbb{R}$ the space of $\mathbb{R}$-Cartier divisors modulo  numerical equivalence and $\rho(X) :=\dim_{\mathbb{R}}\N^1(X)$ the
{\it Picard number} of $X$.
Let $\N_1(X)$ be the dual space of $\N^1(X)$ consisting of 1-cycles.
Denote by $\Nef(X)$ the cone of {\it nef divisors} in $\N^1(X)$ and $\NE(X)$ the dual cone consisting of {\it pseudo-effective 1-cycles} in $\N_1(X)$.

\item Let $f:X\to X$ be a surjective endomorphism. A subset $Y\subseteq X$ is {\it $f^{-1}$-invariant} (resp.~{\it $f^{-1}$-periodic})  if $f^{-1}(Y)=Y$ (resp.~$f^{-s}(Y)=Y$ for some  $s>0$).

\item A surjective endomorphism $f:X\to X$ is \textit{$q$-polarized} if $f^*H\sim qH$ for some ample Cartier divisor $H$ and integer $q>1$; see \cite[Propositions 1.1]{MZ18} for the equivalent definitions.

\item A smooth projective variety $X$ is {\it rationally connected} if any two general points of $X$ can be connected by a chain of rational curves. 

\item A normal projective variety $X$ is of \textit{Fano type}, if there is an effective Weil $\mathbb{Q}$-divisor $\Delta$ on $X$ such that the pair $(X,\Delta)$ has at worst klt singularities and $-(K_X+\Delta)$ is ample and $\mathbb{Q}$-Cartier.
If $\Delta=0$, we say that $X$ is a  \textit{(klt) Fano variety}.

\item Let $Y$ be a projective variety and $\mathcal{E}$ a vector bundle of rank $n$.
Denote by $\pi:\mathbb{P}_Y(\mathcal{E})\to Y$ the projective bundle of hyperplanes in $\mathcal{E}$ (not lines in $\mathcal{E}$), so that $\pi_*\mathcal{O}_{\mathbb{P}_Y(\mathcal{E})}(1) = \mathcal{E}$.
\end{itemize}
\end{notation}

The following two lemmas are well-known and useful.
\begin{lemma}\label{lem-cy-bir}
Let $\pi:X\to Y$ be a birational morphism of two normal projective varieties.
Then $Y$ is of Calabi-Yau type if $X$ is of Calabi-Yau type.
\end{lemma}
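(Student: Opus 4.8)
The plan is to transport the log Calabi--Yau structure along $\pi$ by push-forward. Fix a log Calabi--Yau pair $(X,\Delta_X)$ on $X$, so $\Delta_X\ge 0$, the pair $(X,\Delta_X)$ is lc and $K_X+\Delta_X\sim_{\Q}0$, and set $\Delta_Y:=\pi_*\Delta_X$. Clearly $\Delta_Y\ge 0$, and since $\pi$ is birational we may fix canonical divisors with $\pi_*K_X=K_Y$, so that $K_Y+\Delta_Y=\pi_*(K_X+\Delta_X)\sim_{\Q}0$. In particular $K_Y+\Delta_Y$ is $\Q$-Cartier, hence the pair $(Y,\Delta_Y)$ and its discrepancies are well defined, and it remains only to check that $(Y,\Delta_Y)$ is lc.

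The key step is to identify the pullback: I claim that $\pi^*(K_Y+\Delta_Y)=K_X+\Delta_X$ as $\Q$-divisors. Indeed, the difference $D:=\pi^*(K_Y+\Delta_Y)-(K_X+\Delta_X)$ is $\Q$-Cartier; it is $\pi$-exceptional, because $\pi_*\pi^*(K_Y+\Delta_Y)=K_Y+\Delta_Y=\pi_*(K_X+\Delta_X)$ gives $\pi_*D=0$; and it is numerically $\pi$-trivial, because both $\pi^*(K_Y+\Delta_Y)$ and $K_X+\Delta_X$ are $\Q$-linearly trivial, hence numerically trivial, hence trivial on every $\pi$-contracted curve. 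A $\pi$-exceptional $\Q$-Cartier divisor that is both $\pi$-nef and $\pi$-anti-nef vanishes by the negativity lemma, so $D=0$.

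Granting this, let $F$ be an arbitrary prime divisor over $Y$. As $\pi$ is birational, after passing to a common resolution $g:W\to X$ with $h:=\pi\circ g:W\to Y$ we may assume $F$ is a prime divisor on $W$, and then $h^*(K_Y+\Delta_Y)=g^*\pi^*(K_Y+\Delta_Y)=g^*(K_X+\Delta_X)$, whence $a(F,Y,\Delta_Y)=a(F,X,\Delta_X)\ge -1$ since $(X,\Delta_X)$ is lc. Thus $(Y,\Delta_Y)$ is lc with $K_Y+\Delta_Y\sim_{\Q}0$, i.e.\ $Y$ is of Calabi--Yau type. There is no deep obstacle here; the only points that need genuine care are the invocation of the negativity lemma in the middle step and the bookkeeping that ensures $K_Y+\Delta_Y$ (and hence $\pi^*(K_Y+\Delta_Y)$) is $\Q$-Cartier before any discrepancy is computed. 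The hypothesis that $\pi$ is birational enters crucially through the normalizations $\pi_*K_X=K_Y$ and $\pi_*\pi^*=\mathrm{id}$, and through the identification of divisorial valuations over $X$ and over $Y$; a naive extension to non-birational $\pi$ fails at exactly these points.
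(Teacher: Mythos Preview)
Your proof is correct and follows the same route as the paper: push forward $\Delta_X$ to get $\Delta_Y$, observe $K_Y+\Delta_Y\sim_{\Q}0$, identify $K_X+\Delta_X=\pi^*(K_Y+\Delta_Y)$, and conclude that discrepancies agree. The only difference is presentational: the paper simply asserts the crepant pullback identity and then cites \cite[Lemma~3.38]{KM98} for the singularity comparison, whereas you unpack both steps explicitly via the negativity lemma and a common resolution.
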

\begin{proof}
Suppose the pair $(X, \Delta_X)$ is log Calabi-Yau.
Let $\Delta_Y:=\pi_*\Delta_X$.
Then $K_Y+\Delta_Y=\pi_*(K_X+\Delta_X)\sim_{\mathbb{Q}} 0$.
Note that $K_X+\Delta_X=\pi^*(K_Y+\Delta_Y)$.
By \cite[Lemma 3.38]{KM98}, $(Y,\Delta_Y)$ has singularities not worse than $(X,\Delta_X)$.
So $(Y,\Delta_Y)$ is lc.
\end{proof}

\begin{lemma}\label{lem-cy-qe}
Let $\pi:X\to Y$ be a quasi-\'etale finite surjective morphism of normal projective varieties.
Then $X$ is of Calabi-Yau type if and only if so is $Y$.
\end{lemma}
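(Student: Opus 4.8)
The plan is to exploit that a quasi-\'etale morphism has trivial ramification divisor, so $\omega_X\cong\pi^{[*]}\omega_Y$ and we may choose canonical divisors with $K_X=\pi^*K_Y$; the statement then splits into two implications, one routine and one requiring a symmetrization trick.

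For the implication ``$Y$ of Calabi-Yau type $\Rightarrow$ $X$ of Calabi-Yau type'': choose a log Calabi-Yau pair $(Y,\Delta_Y)$ and put $\Delta_X:=\pi^*\Delta_Y$, the reflexive pullback; since $\pi$ is \'etale in codimension one, a prime divisor of $Y$ pulls back to the reduced sum of its preimages, so $\Delta_X$ is again an effective $\Q$-divisor and $K_X+\Delta_X=\pi^*(K_Y+\Delta_Y)$. As $K_Y+\Delta_Y$ is $\Q$-Cartier with $K_Y+\Delta_Y\sim_\Q 0$, the right-hand side is $\Q$-Cartier and $\sim_\Q 0$; moreover $(X,\Delta_X)$ is lc because $(Y,\Delta_Y)$ is, by the behaviour of discrepancies under finite crepant morphisms (cf.\ \cite[Proposition 5.20]{KM98}). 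Hence $X$ is of Calabi-Yau type.

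For the converse I first reduce to the Galois case: let $\nu\colon\widetilde X\to X$ be the Galois closure of $\pi$ and $g:=\pi\circ\nu\colon\widetilde X\to Y$, with group $G$; the Galois closure of an \'etale-in-codimension-one morphism is again \'etale in codimension one, so $\nu$ and $g$ are quasi-\'etale. Applying the implication just proved to $\nu$ yields a log Calabi-Yau pair $(\widetilde X,\widetilde\Delta)$. Now symmetrize: $\overline\Delta:=\tfrac{1}{|G|}\sum_{\sigma\in G}\sigma_*\widetilde\Delta$ is an effective $G$-invariant $\Q$-divisor, each $(\widetilde X,\sigma_*\widetilde\Delta)$ is log Calabi-Yau, and since discrepancies are affine in the boundary, $(\widetilde X,\overline\Delta)$ is an lc pair with $K_{\widetilde X}+\overline\Delta\sim_\Q 0$. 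Finally descend: by quasi-\'etaleness and $G$-invariance, $\overline\Delta=g^*\Delta_Y$ for a unique effective $\Q$-divisor $\Delta_Y$ on $Y$, so $g^*(K_Y+\Delta_Y)=K_{\widetilde X}+\overline\Delta\sim_\Q 0$; choosing $m>0$ with $m(K_{\widetilde X}+\overline\Delta)\sim 0$, the natural $G$-linearization of $\OO_{\widetilde X}(m(K_{\widetilde X}+\overline\Delta))$ differs from the trivial one by a character of $G$, which is killed once we replace $m$ by $m|G|$ (global units on the projective variety $\widetilde X$ being constants), and descending this equivariant trivialization gives $m(K_Y+\Delta_Y)\sim 0$. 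In particular $K_Y+\Delta_Y$ is $\Q$-Cartier with $K_Y+\Delta_Y\sim_\Q 0$, and $(Y,\Delta_Y)$ is lc by the same crepant-morphism statement applied to $g$; hence $Y$ is of Calabi-Yau type.

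I expect the delicate point to be this final descent in the converse: the reduction to a Galois cover is forced by the non-uniqueness of $\Delta_X$, and one must make sure the $\Q$-Cartier property of $K_Y+\Delta_Y$---needed even to speak of the pair $(Y,\Delta_Y)$---really does descend, which is why I use the equivariant trivialization rather than a bare numerical argument; an alternative is to descend only numerical triviality $K_Y+\Delta_Y\equiv 0$ and then invoke abundance for lc pairs (cf.\ \cite[Theorem 1.2]{Gon13}), after separately verifying $\Q$-Cartierness.
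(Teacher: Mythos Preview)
Your proof is correct and follows essentially the same approach as the paper: the easy direction via pullback and \cite[Proposition~5.20]{KM98}, and the converse via passage to the Galois closure (which the paper cites as \cite[Theorem~3.7]{GKP16}), averaging over $G$, and descent. The only substantive difference is that you spell out the descent of $\Q$-Cartierness and of $\sim_\Q 0$ via an equivariant trivialization (or alternatively via abundance), whereas the paper leaves this implicit and simply defines $\Delta_Y=\frac{1}{\deg\pi}\pi_*\Delta$ and concludes directly; your extra care here is justified but not a different strategy.
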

\begin{proof}
Suppose $(Y, \Delta_Y)$ is log Calabi-Yau.
Let $\Delta_X=\pi^*\Delta_Y$.
Since $\pi$ is quasi-\'etale, $K_X=\pi^*K_Y$ and hence $(X, \Delta_X)$ is log Calabi-Yau by \cite[Proposition 5.20]{KM98}.

Conversely, assume that $(X, \Delta_X)$ is log Calabi-Yau.
Note that the Galois closure of $\pi$ is still quasi-\'etale by \cite[Theorem 3.7]{GKP16}.
So we may assume that $\pi$ is the quotient map of $X$ by a finite group $G$.
Let $\Delta=\frac{1}{|G|}\sum\limits_{g\in G} g^*\Delta_X$ and $\Delta_Y=\frac{1}{\deg \pi}\pi_*\Delta$.
Then $\Delta=\pi^*\Delta_Y$.
Let $\phi:W\to X$ be a log resolution of $(X,\Delta)$ and $E$ a $\phi$-exceptional prime divisor.
Then we have the discrepency 
\begin{align*}
 a(E,X,\Delta)&=mult_E(K_W-(\phi^*K_X+\frac{1}{|G|}\sum\limits_{g\in G} g^*\Delta_X))\\
&=\frac{1}{|G|}\sum\limits_{g\in G}  mult_E(K_W-(\phi^*K_X+g^*\Delta_X))\\
&=\frac{1}{|G|}\sum\limits_{g\in G} a(E,X,g^*\Delta_X)\ge -1
\end{align*}
where $mult_E$ means the coefficient of the divisor on $E$ and the last inequality holds by noting that each pair  $(X,g^*\Delta_X)$ is log Calabi-Yau for any $g\in G$.
Furthermore, we have
$$K_X+\Delta=\frac{1}{|G|}\sum\limits_{g\in G} (K_X+g^*\Delta_X)=\frac{1}{|G|}\sum\limits_{g\in G} g^*(K_X+\Delta_X)\sim_{\Q} 0.$$
So  $(X,\Delta)$ is log Calabi-Yau.
Since $\pi$ is quasi-\'etale by the assumption, we have $K_X=\pi^*K_Y$ and hence
$$K_X+\Delta=\pi^*(K_Y+\Delta_Y).$$
Therefore, $(Y,\Delta_Y)$ is log Calabi-Yau by  \cite[Proposition 5.20]{KM98}.
\end{proof}

\begin{proposition}\label{prop-kappa0}
Let $f:X\to X$ be a polarized endomorphism of a projective variety.
Let $D$ be an effective $\Q$-Cartier divisor on $X$ with $\kappa(X, D)=0$.
Then $\Supp D$ is $f^{-1}$-periodic.
\end{proposition}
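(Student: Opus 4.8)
The plan is to follow $f$ \emph{backwards} along $\Supp D$ and show that only finitely many prime divisors are ever met; this is equivalent to $\Supp D$ being $f^{-1}$-periodic.

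\emph{Step 1 (reductions).} Since $\kappa(X,D)=0$ and $\Supp D$ is unchanged on replacing $D$ by a positive multiple, I may assume $D$ is an integral Weil divisor with $h^{0}(X,\mathcal O_X(mD))=1$ for all $m\ge 1$, so that $mD$ is the unique effective divisor in $|mD|$. As any effective Weil divisor $D'\le mD$ gives a subsheaf $\mathcal O_X(D')\hookrightarrow\mathcal O_X(mD)$, it follows that $\kappa(X,D')=0$ for every such $D'$; in particular $\kappa(X,R)=0$ for $R:=\Supp D$ and for every partial sum of prime components of $R$. Next, replacing $f$ (and correspondingly $q$) by a suitable power, which affects neither the hypothesis nor the conclusion, I may assume every eigenvalue of $f^{*}$ on $\N^{1}(X)_{\R}$ equals $q$, using the known fact that for a $q$-polarized endomorphism all these eigenvalues have modulus $q$. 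Then $U:=\tfrac1q f^{*}$ is unipotent on $\N^{1}(X)$, and $f^{*}$, hence $f_{*}=\deg(f)\,(f^{*})^{-1}$, are bijective on $\N^{1}(X)$. Write $d=\dim X$ and $\deg_H(\cdot):=(\cdot)\cdot H^{d-1}$; from $f^{*}H\sim qH$ and $f_{*}f^{*}=\deg(f)\cdot\mathrm{id}$ one gets $f_{*}H^{d-1}=qH^{d-1}$.

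\emph{Step 2 (combinatorics).} Put $R_k:=f^{-k}(R)=\Supp\big((f^{k})^{*}D\big)$; by Step 1 each $R_k$ is the support of an effective divisor of Iitaka dimension $0$, and every prime component $P$ of any $R_k$ has $\kappa(X,P)=0$. Since $f$ is finite and surjective, preimages of distinct prime divisors are disjoint and nonempty, so the number $\nu_k$ of prime components of $R_k$ is non-decreasing in $k$. Let $\mathcal P$ be the set of all prime divisors occurring in some $R_k$. If $f^{-1}(P)$ is irreducible for every $P\in\mathcal P$, then $f$ and ``take the preimage'' are mutually inverse bijections of $\mathcal P$, which then decomposes into finitely many $f^{-1}$-orbits, one through each prime component of $R$; if moreover each such orbit is finite, some power $f^{-s}$ fixes every $P\in\mathcal P$ set-theoretically, so $f^{-s}(R)=R$ and we are done. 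Thus it suffices to prove (a) $f^{-1}(P)$ is irreducible for each $P\in\mathcal P$, and (b) every $f^{-1}$-orbit in $\mathcal P$ is finite.

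\emph{Step 3 (reduction of (a),(b) to a rigidity claim).} Both follow from the \textbf{Key Claim}: if $P_1,P_2$ are distinct prime divisors with $[P_1],[P_2]$ positively proportional in $\N^{1}(X)$ and $\kappa(X,P_1+P_2)=0$, then a contradiction ensues. Indeed, for (a): a reducible $f^{-1}(P)$ has distinct prime components $E',E''$; since $f_{*}E'$ and $f_{*}E''$ are positive multiples of $P$ and $f_{*}$ is injective on $\N^{1}(X)$, the classes $[E'],[E'']$ are positively proportional, while $E'+E''$ is a partial sum over $R_{k+1}$ so $\kappa(X,E'+E'')=0$ — contradicting the Key Claim. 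For (b): by (a), an orbit is $(E_0,E_1,E_2,\dots)$ with $f^{*}E_{k-1}=b_kE_k$, $b_k\in\Z_{>0}$, whence $[E_k]=\tfrac{q^{k}}{b_1\cdots b_k}U^{k}[E_0]$ and $\deg_H(E_k)=\tfrac{q^{k}}{b_1\cdots b_k}\deg_H(E_0)$; hence the normalized effective classes $[E_k]/\deg_H(E_k)=U^{k}[E_0]/\deg_H(E_0)$ all lie on the compact base $\{\delta:\deg_H\delta=1\}$ of the pseudo-effective cone in $\N^{1}(X)_{\R}$, so they form a bounded set. But $U$ is unipotent, so $U^{k}[E_0]$ is a polynomial in $k$; boundedness forces it to be constant, i.e.\ $f^{*}[E_0]=q[E_0]$, and then all $[E_k]$ are positively proportional to $[E_0]$. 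If the orbit were infinite, two distinct $E_k$ would then again violate the Key Claim (as $\kappa(X,E_k+E_{k'})=0$); hence the orbit is finite.

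\emph{Step 4 (the Key Claim and the main obstacle).} Write $p[P_1]=q_0[P_2]$ with $p,q_0$ coprime positive integers; for a suitable $e\ge 1$ the class $\gamma$ of $\mathcal O_X(e pP_1-eq_0P_2)$ lies in $\Pic^{0}(X)$. If $\gamma$ is torsion, then $epP_1\sim_{\Q}eq_0P_2$, producing two distinct effective divisors in a single $\Q$-linear system, which contradicts $\kappa(X,P_1+P_2)=0$; so it remains to exclude the case that $\gamma$ has infinite order in $\Pic^{0}(X)$. This is where the polarized endomorphism must be used essentially: $f^{*}$ acts on the abelian variety $\Pic^{0}(X)$ with all eigenvalues of modulus $\sqrt q\neq 1$, so $f^{*}-\mathrm{id}$ is an isogeny and $f^{*}$ fixes no nonzero element; combining this with the rigidity of $epP_1$ and $eq_0P_2$ (each the unique effective divisor in its $\Q$-linear class) — and with the compatibility $[(f^{k})^{*}P_i]$ stays proportional — should force $\gamma$ to be torsion after all. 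I expect this non-torsion case to be the genuine difficulty in the proof; everything else is bookkeeping around the reductions above.
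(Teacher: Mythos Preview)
Your Step 4 is not a proof but a hope, and you say so yourself: the non-torsion case of the Key Claim is left open. This is a genuine gap. Two distinct prime divisors $P_1,P_2$ with proportional classes in $\N^1(X)$ and $\kappa(X,P_1+P_2)=0$ are not, in general, forced to be $\Q$-linearly proportional; to pass from numerical to $\Q$-linear equivalence you must control the class in $\Pic^0(X)$, and nothing in your Steps 1--3 gives you that control. The sentence ``$f^*$ fixes no nonzero element of $\Pic^0(X)$'' is true but does not by itself relate $\gamma=[epP_1-eq_0P_2]$ to $f^*\gamma$, because the $P_i$ you produced in Step 3 are not themselves $f$-eigenclasses in $\Pic(X)$, only in $\N^1(X)$. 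So Steps 3(a) and 3(b) both rest on an unproved claim.

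The paper's argument bypasses this entirely by working with $\Q$-linear equivalence from the start. The key input is \cite[Proposition 3.7]{MZ19}, which for polarized $f$ gives the genuine $\Q$-linear relation
\[
f^*f_*D \;\sim_{\Q}\; f_*f^*D \;=\; (\deg f)\,D,
\]
not merely a numerical one. Since $\kappa(X,D)=0$, the unique effective representative forces $f^*f_*D=(\deg f)D$ as divisors, hence $f^{-i}f^{i}(\Supp D)=\Supp D$ for all $i\ge 0$. A short combinatorial lemma (\cite[Lemma 8.1]{Men20}) then upgrades this ``$f^{-1}f$-saturation'' to $f^{-1}$-periodicity. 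The moral: the passage from numerical to $\Q$-linear equivalence is exactly where the polarized hypothesis does its work, and the cited result packages that work; your route through $\N^1(X)$ loses this information and then cannot recover it.
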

\begin{proof}
By \cite[Proposition 3.7]{MZ22}, we have 
$$f^*f_*D\sim_{\Q} f_*f^*D=(\deg f) D.$$
Since $\kappa(X, D)=0$, we have $f^*f_*D=(\deg f) D$.
In particular, $f^{-i}f^i(\Supp D)=\Supp D$ for all $i\ge 0$.
By \cite[Lemma 8.1]{Men20}, $\Supp D$ is $f^{-1}$-periodic.
\end{proof}

A $\Q$-divisor $D$ is said to be {\it $\Q$-movable} if for any prime divisor $\Gamma$, one has $\Gamma\not\subseteq \Supp D'$ for some effective $\Q$-divisor $D'\sim_{\Q} D$.

Let $f:X\to X$ be a polarized endomorphism of a normal projective variety.
Denote by 
$T_f$
the finite union of $f^{-1}$-periodic prime divisors (cf.~\cite[Corollary 3.8]{MZ20}).
Denote by $P_f:=-(K_X+T_f)$.

\begin{proposition}\label{prop--kkappa0}
Let $X$ be a $\Q$-Gorenstein normal projective variety admitting a polarized endomorphism $f$.
Assume further $\kappa(X, -K_X)=0$.
Then $\Supp R_f=T_f$, $-K_X\sim_{\Q} T_f$ and $(X, T_f)$ is lc.
\end{proposition}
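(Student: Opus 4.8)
The plan is to pin down $-K_X$ up to $\Q$-linear equivalence as the reduced divisor $T_f$, and then to read off log canonicity from the resulting self-map structure. Since $\kappa(X,-K_X)=0$ there is, up to $\Q$-linear equivalence, a \emph{unique} effective $\Q$-divisor $D\sim_{\Q}-K_X$: for $m$ sufficiently divisible $h^0(X,\OO_X(m(-K_X)))=1$, so the fixed part of $|m(-K_X)|$ is all of it. First I would apply Proposition~\ref{prop-kappa0} to $D$, obtaining that $\Supp D$ is $f^{-1}$-periodic; since $T_f$ is the union of all $f^{-1}$-periodic prime divisors, this gives $\Supp D\subseteq T_f$. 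Pulling $D$ back and using the ramification divisor formula $K_X\sim_{\Q}f^*K_X+R_f$ yields
$$f^*D\ \sim_{\Q}\ -f^*K_X\ \sim_{\Q}\ -K_X+R_f\ \sim_{\Q}\ D+R_f ,$$
where $f^*D$ and $D+R_f$ are both effective. As $\kappa(X,f^*D)=\kappa(X,D)=0$ (pull-back along a finite surjective morphism does not change Iitaka dimension) and an effective $\Q$-divisor $\Q$-linearly equivalent to one of Iitaka dimension zero is unique, we obtain the honest equality $f^*D=D+R_f$ of $\Q$-divisors. In particular $\Supp R_f\subseteq\Supp(f^*D)=f^{-1}(\Supp D)$; and since $\Supp D\subseteq f^{-1}(\Supp D)$ while $\Supp D$ is $f^{-1}$-periodic, one gets $f^{-1}(\Supp D)=\Supp D$, hence $\Supp R_f\subseteq\Supp D\subseteq T_f$.

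The key point is then a computation of ramification indices. If $\Gamma$ is an $f^{-1}$-invariant prime divisor, then $f|_\Gamma\colon\Gamma\to\Gamma$ is again $q$-polarized (restrict $H$), so $\deg(f|_\Gamma)=q^{\dim X-1}$, while $\deg f=q^{\dim X}$; since $\Gamma$ is the only prime divisor lying over $\Gamma$, the fundamental identity $e_\Gamma\cdot\deg(f|_\Gamma)=\deg f$ for the finite flat local extension forces the ramification index $e_\Gamma=q>1$, so $\Gamma$ is a component of $R_f$, with coefficient $q-1$. Replacing $f$ by a positive power so that every $f^{-1}$-periodic prime divisor is $f^{-1}$-invariant — which changes neither $T_f$ nor the asserted conclusions, $\Supp R_f$ then being $f^{-1}$-invariant — we obtain $T_f\subseteq\Supp R_f$, hence, with the previous paragraph, $\Supp R_f=\Supp D=T_f$; moreover, since $f^*\Gamma=q\Gamma$ for every component $\Gamma$ of $T_f$, we get $R_f=(q-1)T_f$. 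Finally $f^*D=qD$ (again using $f^*\Gamma=q\Gamma$), which with $f^*D=D+R_f$ gives $(q-1)D=R_f=(q-1)T_f$, i.e. $D=T_f$. Thus $-K_X\sim_{\Q}T_f$ and $R_f=(q-1)T_f$.

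It remains to show that $(X,T_f)$ is lc, and this I expect to be the main obstacle. From $R_f=(q-1)T_f$ and $f^{-1}(T_f)=T_f$ one gets $f^*(K_X+T_f)=(K_X-R_f)+qT_f=K_X+T_f$, so $f$ is crepant for the pair $(X,T_f)$ and, moreover, étale in codimension one for it (the only codimension-one ramification is along $T_f$, where it is exactly absorbed by the boundary). Therefore log discrepancies of $(X,T_f)$ are preserved under pull-back by $f$, so $\Nlc(X,T_f)$ is $f^{-1}$-invariant. The pair is lc in codimension one ($X$ being normal and $T_f$ reduced), so were $\Nlc(X,T_f)$ nonempty it would be a proper closed subset of codimension $\ge 2$, and after a further iteration it would contain an $f^{-1}$-invariant irreducible component on which $f$ restricts to a $q$-polarized endomorphism; a contradiction should follow by combining the preservation of log discrepancies along $f$ with the positivity $f^*H\sim qH$. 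Alternatively, one may deduce the log canonicity from the known cases/cited results, or by induction on $\dim X$ via inversion of adjunction applied to the components of $T_f$. The delicate part is precisely this last step — excluding a nonempty $f^{-1}$-invariant non-lc locus; the passage to an iterate making $f^{-1}$-periodic divisors $f^{-1}$-invariant is routine bookkeeping.
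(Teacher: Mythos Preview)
Your argument for $\Supp D=T_f$ and $-K_X\sim_{\Q}T_f$ is essentially correct and amounts to a direct reconstruction of what \cite[Theorem~6.2]{MZ19} provides, which is precisely what the paper cites for these two conclusions. One small slip: replacing $f$ by an iterate \emph{does} change $R_f$, so the assertion $\Supp R_f=T_f$ is not literally preserved under iteration. What survives iteration is the identity $D=T_f$ (the unique effective representative of $-K_X$ is independent of $f$), and from $f^*T_f=T_f+R_f$ for the original $f$ one only reads off $\Supp R_f\subseteq T_f$; the reverse inclusion for $f$ itself (rather than an iterate) is not established by your degree computation, since along an $f^{-1}$-periodic cycle of length $s>1$ the individual ramification indices need only multiply to $q^s$. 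In practice the paper always applies this proposition after iteration, so this is harmless, but your parenthetical justification is not quite right.

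The genuine gap is the log canonicity of $(X,T_f)$, which you correctly flag as the main obstacle and do not close. Your sketch --- $f^*(K_X+T_f)=K_X+T_f$, log discrepancies are preserved under pull-back, hence $\Nlc(X,T_f)$ is $f^{-1}$-invariant, and a contradiction should come from the polarized dynamics on this locus --- is the right shape, but the final ``contradiction should follow'' is exactly the nontrivial content. The paper does not argue this step either: it simply invokes \cite[Theorem~1.3]{Zha13}, which is precisely the statement that $(X,D)$ is lc whenever $D$ is a reduced divisor with $f^{-1}(D)=D$ for a non-isomorphic surjective endomorphism $f$. Zhang's proof runs a careful discrepancy computation on a log resolution under iteration of $f$ and is a substantial external input here, not a routine verification; your inversion-of-adjunction alternative would face the same issue one dimension down.
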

\begin{proof}
This follows from \cite[Theorem 6.2]{MZ22} and \cite[Theorem 1.3]{Zha13}.
\end{proof}

We generalize \cite[Theorem 1.5]{MZ22}.
\begin{theorem}\label{thm-move}
Let $f:X\to X$ be a polarized endomorphism of a $\Q$-factorial normal projective variety.
Then $-(K_X+T_f)$ is $\Q$-movable.
\end{theorem}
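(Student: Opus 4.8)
The plan is to follow the strategy of \cite[Theorem 1.5]{MZ19}, which handles the case $T_f = \Supp R_f$, and to push it through for the possibly larger divisor $T_f$. First I would recall the two key facts about $T_f$. On one hand, $T_f$ is $f^{-1}$-invariant after replacing $f$ by an iterate, so there is a natural pullback relation $f^* T_f = q' T_f + (\text{stuff supported off } T_f)$; more precisely, since every component of $T_f$ is $f^{-1}$-periodic, after iteration each such component $\Gamma$ satisfies $f^{-1}(\Gamma) = \Gamma$, and the ramification divisor formula gives a clean description of the coefficients. On the other hand, the ramification divisor formula reads $K_X = f^*K_X + R_f$, equivalently $-(q-1)(K_X) \sim f^*(-K_X) - R_f$ after using $f^* H \sim q H$ together with the projection-formula bookkeeping of \cite[Proposition 3.7]{MZ19}. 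Combining these, $P_f = -(K_X + T_f)$ should satisfy a pullback identity of the shape $f^* P_f \sim_{\Q} q\, P_f$ (or $q'P_f$ for the appropriate $q'$), i.e. $P_f$ is a ``$q$-polarized'' class, at least numerically.

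The core of the argument is then the following dichotomy, exactly as in \cite{MZ19}: either $P_f$ is already $\Q$-movable and we are done, or $P_f$ has a fixed component $\Gamma$ in its stable base locus. In the second case, write the divisorial part of the stable base locus as an effective divisor $N$ with $\Supp N \subseteq \Bd(P_f)$; the point is that $\Bd(P_f)$, hence $N$, is forced to be $f^{-1}$-periodic. Indeed, from the pullback identity $f^* P_f \sim_{\Q} q P_f$ one gets $f^*$ preserves the stable base locus: $f^{-1}(\Bd(P_f)) = \Bd(f^*P_f) = \Bd(P_f)$. Therefore every prime divisor appearing in $N$ is $f^{-1}$-periodic, so $\Supp N \subseteq T_f$ by the very definition of $T_f$ as the union of \emph{all} $f^{-1}$-periodic prime divisors. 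But now $N \le -(K_X + T_f)$ as divisor classes would force a contradiction with the fact that $K_X + T_f + (\text{something effective supported on } T_f)$ cannot be too negative; concretely, writing $P_f \sim_{\Q} M + N$ with $M$ movable and $N$ effective with $\Supp N \subseteq T_f$, I would feed this back into $-(K_X + T_f) \sim_{\Q} M + N$, i.e. $-(K_X + (T_f + N)) \sim_{\Q} M \ge 0$, and observe that $T_f + N$ is again an effective divisor whose support is $f^{-1}$-periodic (it equals $T_f$ as a set!), so by the maximality of $T_f$ among such divisors we must already have absorbed $N$ — contradicting that $N$ was a genuine fixed component. This is the step where I expect to have to be careful about coefficients versus supports.

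The main obstacle, and the place where real work is needed beyond quoting \cite{MZ19}, is establishing cleanly that $P_f = -(K_X + T_f)$ satisfies the polarized pullback relation $f^*P_f \sim_{\Q} q P_f$ when $T_f$ is strictly larger than $\Supp R_f$ — in \cite[Theorem 1.5]{MZ19} one only needs it for $-(K_X + \Supp R_f)$, and the extra periodic components of $T_f$ must be shown not to disturb the numerical identity. For this I would use \cite[Corollary 3.8]{MZ20} to reduce (after iterating $f$) to the situation where $T_f$ is genuinely $f^{-1}$-invariant, then compute the coefficient of each component of $T_f$ in $f^*T_f$ using the ramification indices: the ramification divisor formula $K_X = f^*K_X + R_f$ shows that for an $f^{-1}$-invariant prime divisor $\Gamma$ with ramification index $e_\Gamma$ along itself, the coefficients match up so that $f^*(K_X + \Gamma) = K_X + \Gamma + (R_f - (e_\Gamma - 1)\Gamma)$, and summing over the components of $T_f$ together with the polarization $f^*(-K_X)\sim q(-K_X)$ pins down $f^* P_f \sim_{\Q} q P_f$. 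Once this identity is in hand, the dichotomy argument above goes through verbatim, and an application of Proposition \ref{prop-kappa0} (to the divisorial fixed part, which has Iitaka dimension $0$) can be used to streamline the periodicity claim if the direct base-locus argument proves delicate.
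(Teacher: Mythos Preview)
There is a genuine gap. The pullback identity you base the argument on, $f^*P_f \sim_{\Q} qP_f$, is false in general. After iteration so that each component of $T_f$ is $f^{-1}$-invariant, one does have $f^*T_f = qT_f$; but $f^*K_X = K_X - R_f$, not $qK_X$, so the correct log ramification formula is
\[
f^*\bigl(-(K_X+T_f)\bigr) \;=\; -(K_X+T_f) + \Delta_f,\qquad \Delta_f := R_f - (q-1)T_f,
\]
with $\Delta_f$ effective and sharing no component with $T_f$. (Numerically $f^*P_f \equiv qP_f$ would force $f^*K_X\equiv qK_X$, which is not part of the hypotheses.) So your stable-base-locus argument for periodicity, which rests on $f^*P_f\sim_{\Q} qP_f$, does not go through; and you have also not established that $P_f$ is effective (or even pseudo-effective), which is needed before you can speak of its $\sigma$-decomposition at all. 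In the paper this is obtained by repeating the proof of \cite[Theorem 6.2]{MZ19} with $-K_X$ replaced by $-(K_X+T_f)$, using the identity above.

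The second gap is the final contradiction. Writing $P_f \sim_{\Q} M+N$ and observing $\Supp N\subseteq T_f$ is fine (Proposition \ref{prop-kappa0} does give this, as you note), but your ``maximality of $T_f$'' step does not make sense: $T_f$ is a \emph{reduced} divisor, the set-theoretic union of all $f^{-1}$-periodic primes, and there is no maximality among effective divisors to invoke; $-(K_X+(T_f+N))\sim_{\Q} M$ says nothing useful since $T_f+N$ has the same support as $T_f$ but larger coefficients. The actual mechanism is a coefficient computation in the $\sigma$-decomposition: for any prime $\Gamma\subseteq\Supp N\subseteq T_f$ one has $f^*\Gamma=q\Gamma$, hence $\sigma_\Gamma(f^*P_f)=q\,\sigma_\Gamma(P_f)$; on the other hand the log ramification identity gives $\sigma_\Gamma(f^*P_f)=\sigma_\Gamma(P_f+\Delta_f)\le \sigma_\Gamma(P_f)+\sigma_\Gamma(\Delta_f)=\sigma_\Gamma(P_f)$, the last equality because $\Gamma\not\subset\Supp\Delta_f$. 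These force $\sigma_\Gamma(P_f)=0$, hence $N=0$.
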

\begin{proof}
After iteration, we may assume each component of $T_f$ is $f^{-1}$-invariant.
Consider the log ramification divisor formula
$$f^*(-(K_X+T_f))=-(K_X+T_f)+\Delta_f$$
where $\Delta_f=R_f-(q-1)T_f$ is effective and contains no common component of $T_f$.
Replacing $-K_X$ by $-(K_X+T_f)$ in the proof of \cite[Theorem 6.2]{MZ22},
we have that $-(K_X+T_f)$ is effective.
Let $P$ be a prime divisor such that $P\subseteq \Supp D$ for any effective $\Q$-divisor $D\sim_{\Q} -(K_X+R_f)$.
Then $\kappa(X,P)=0$.
By Proposition \ref{prop-kappa0}, $P\subseteq T_f$.
For any effective $\Q$-divisor $D'\sim_{\Q} f^*(-(K_X+T_f))$, we have $\frac{1}{\deg f}f_*D'\sim_{\Q} -(K_X+T_f)$ and hence $P\subseteq \Supp D'$.

Let $V$ be the subspace of $\Pic_{\mathbb{R}}(X)$ spanned by $(f^i)^*\Delta_f$ and $C$ the cone generated $\Q$-movable divisors in $V$.
By \cite[Proposition 3.7]{MZ22}, $V$ is $f^*|_{\Pic_{\mathbb{R}}(X)}$-invariant and finite dimensional.
Note that $f^*(C)= C$.
Let $P$ be a prime divisor such that $P\subseteq \Supp D$ for any effective $\Q$-divisor $D\sim_{\Q} \Delta_f$.
Then $\kappa(X,P)=0$.
By Proposition \ref{prop-kappa0}, $P\subseteq T_f$.
Note that $\Delta_f$ has no common component of $T_f$.
So such $P$ does not exist and hence $\Delta_f\in C$.
A similar argument shows that $f^*(C)= C$.
By \cite[Proposition 3.2]{Men20}, we have $-(K_X+T_f)\in C$.
\end{proof}

We recall the following result by Yoshikawa \cite[Proposition 6.2]{Yos20} that Fano type can be preserved by (polarized) equivariant birational map.
\begin{proposition}\label{prop-bir-fanotype}
Let $f:X\to X$ be a polarized endomorphism of a normal projective variety $X$.
Let $\pi:X\dashrightarrow Y$ be an $f$-equivariant birational map.
Then $X$ is of Fano type if and only if so is $Y$.
\end{proposition}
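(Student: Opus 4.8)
\emph{Step 1 (reduction to a morphism).} A polarized endomorphism rigidifies birational geometry enough that ``Fano type'' ought to be a birational invariant within this class, and the plan is to make this precise by reducing to an equivariant birational \emph{morphism}. Since $\pi$ is $f$-equivariant there is a surjective $g:Y\to Y$ with $g\circ\pi=\pi\circ f$, and $g$ is $q$-polarized because $f$ is and $\pi$ is birational. After replacing $f$ and $g$ by a common iterate, resolve the indeterminacy of $\pi$ equivariantly: one gets a $\Q$-factorial normal projective variety $W$ with birational morphisms $p:W\to X$, $r:W\to Y$ and a $q$-polarized endomorphism $f_W$ satisfying $p\circ f_W=f\circ p$ and $r\circ f_W=g\circ r$. (The base locus of $\pi$ is, after iteration, $f^{-1}$-invariant, so one may blow up $f^{-1}$-invariant centres; this is the equivariant resolution of indeterminacy used throughout the equivariant MMP, cf.\ \cite{MZ18,MZ20}.) By the symmetry between $X$ and $Y$ it then suffices to prove: for an $f$-equivariant birational morphism $p:W\to X$ of $\Q$-factorial normal projective varieties, each carrying a compatible $q$-polarized endomorphism, $X$ is of Fano type if and only if $W$ is of Fano type.

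\emph{Step 2 (the two implications).} The implication ``$W$ of Fano type $\Rightarrow$ $X$ of Fano type'' is the Fano-type analogue of Lemma~\ref{lem-cy-bir} and uses no dynamics: since $W$ is of Fano type one may choose a klt boundary with $-(K_W+\Delta_W)=p^*A$ for some ample $\Q$-divisor $A$ on $X$ (using that $\Nef(W)$ is rational polyhedral and nef divisors on $W$ are semiample), so that $(X,p_*\Delta_W)$ is klt with $-(K_X+p_*\Delta_W)=A$ ample. For the converse, ``$X$ of Fano type $\Rightarrow$ $W$ of Fano type'', pull back a log Fano structure $(X,\Delta_X)$ to get $K_W+\Gamma=p^*(K_X+\Delta_X)$ with $(W,\Gamma)$ sub-klt and $-(K_W+\Gamma)=p^*(-(K_X+\Delta_X))$ big and nef; the only obstruction to $\Gamma$ being an effective boundary is a negative part $\Gamma_-\ge 0$ supported on the $p$-exceptional prime divisors of discrepancy $>1$ for $(X,\Delta_X)$. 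Here the dynamics enters: every $p$-exceptional prime divisor is $f_W^{-1}$-periodic, hence a component of $T_{f_W}$, so $\Supp\Gamma_-\subseteq\Supp T_{f_W}$; combining Theorem~\ref{thm-move} (the $\Q$-movability of $-(K_W+T_{f_W})$) with the log ramification formula one perturbs $\Gamma$ to an effective klt boundary $\Delta_W$ with $-(K_W+\Delta_W)$ still big and nef, which yields that $W$ is of Fano type. A cleaner alternative: $X$ of Fano type $\Rightarrow$ $X$ rationally connected $\Rightarrow$ $W$ rationally connected, and for a $\Q$-factorial variety admitting a polarized endomorphism, being rationally connected is equivalent to being of Fano type by the structure theory of the equivariant MMP, cf.\ \cite{MZ20,CMZ20}.

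\emph{Main obstacle.} The hard part is the converse in Step~2: Fano type is \emph{not} preserved under arbitrary birational morphisms (blowing up enough points of $\PP^2$ destroys it), so one must genuinely exploit that the extracted and contracted divisors are $f_W^{-1}$-periodic, i.e.\ that the polarized dynamics forces $W$ back into the Fano-type world. Absorbing $\Gamma_-$ while keeping $-(K_W+\Delta_W)$ big and nef is the technical heart of the argument; the equivariant resolution of Step~1, requiring a lifted polarized endomorphism on $W$, is a further standard-but-nontrivial input.
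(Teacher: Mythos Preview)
The paper does not give its own argument here: it simply invokes \cite[Proposition~6.2]{Yos20} with $\Delta=\Gamma=0$. So your proposal is not to be compared against an argument in the paper, but against Yoshikawa's; what you have written is a plausible outline of how such a proof might go, but two of your key steps are not actually established by the tools you cite.

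In approach~(a) of Step~2, the sentence ``combining Theorem~\ref{thm-move} \ldots\ one perturbs $\Gamma$ to an effective klt boundary $\Delta_W$ with $-(K_W+\Delta_W)$ still big and nef'' is exactly the assertion to be proved, and you have not said how $\Q$-movability of $-(K_W+T_{f_W})$ achieves it. Concretely, writing $\Gamma=\Gamma_+-\Gamma_-$ one has $-(K_W+\Gamma_+)=p^*A-\Gamma_-$, and there is no reason this is nef; absorbing $\Gamma_-$ without destroying nefness is the whole content of the proposition, and the log ramification formula together with Theorem~\ref{thm-move} do not obviously supply the needed effective divisor. (Incidentally, $\Gamma_-$ is supported on exceptional divisors with discrepancy $>0$, not $>1$.) Your ``cleaner alternative''~(b) is essentially circular: the implication ``rationally connected $+$ polarized $\Rightarrow$ Fano type'' is Yoshikawa's theorem (Theorem~\ref{thm-yos} here), which is stated only for \emph{smooth} varieties, whereas your $W$ is merely $\Q$-factorial; and in any case Yoshikawa's proof of that corollary passes through the very Proposition~6.2 the paper is citing. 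Finally, the equivariant resolution in Step~1 is more delicate than indicated: lifting $f$ to a \emph{regular} endomorphism of the blow-up requires the centres to be totally $f^{-1}$-invariant, not merely periodic, and arranging a single $W$ that simultaneously dominates $X$ and $Y$ with a regular polarized lift is itself a nontrivial ingredient that you would need to justify.
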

\begin{proof}
We may simply take $\Delta=\Gamma=0$ in \cite[Proposition 6.2]{Yos20}.
\end{proof}

We recall the nice result of Yoshikawa \cite[Corollary 1.4]{Yos20}.
\begin{theorem}\label{thm-yos}
Let $X$ be a rationally connected smooth projective variety admitting a polarized endomorphism.
Then $X$ is of Fano type.
\end{theorem}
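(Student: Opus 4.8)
The plan is to prove, by induction on $\dim X$, the slightly stronger statement that \emph{every rationally connected $\Q$-factorial klt projective variety admitting a polarized endomorphism is of Fano type}. Since a smooth projective variety is $\Q$-factorial and klt, this implies the theorem; and the klt formulation is the one the induction really needs, because the minimal model program will immediately push us out of the smooth category. The case $\dim X=0$ is trivial, so the interesting case is $\dim X=n\ge 1$ assuming the statement in all dimensions $<n$.

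First I would use that a rationally connected variety is uniruled, so $K_X$ is not pseudo-effective; hence, after replacing $f$ by a suitable positive iterate, one can run the $f$-equivariant $K_X$-minimal model program (cf.~\cite{MZ18,MZ19,MZ20,CMZ20}), producing a chain of $f$-equivariant birational maps $X=X_0\dashrightarrow X_1\dashrightarrow\cdots\dashrightarrow X_r$ of $\Q$-factorial klt projective varieties, each again carrying a polarized endomorphism, which (since $K_X$ is not pseudo-effective) must terminate with a Mori fiber space $\tau\colon X_r\to Z$ with $\dim Z<n$. Applying Proposition \ref{prop-bir-fanotype} along this chain reduces the problem to showing that $X_r$ is of Fano type; note that this step genuinely uses the dynamics, since being of Fano type is not preserved by arbitrary birational maps, only by $f$-equivariant ones.

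Next I would observe that the base $Z$ is again a rationally connected $\Q$-factorial klt projective variety: it is $\Q$-factorial and klt as the base of a $K$-Mori fiber space from a $\Q$-factorial klt variety, and it is rationally connected because it is dominated by $X_r$; moreover the equivariant program equips $Z$ with a polarized endomorphism. Since $\dim Z<n$, the induction hypothesis shows $Z$ is of Fano type. On the other hand $-K_{X_r}$ is $\tau$-ample and $X_r$ is klt, so $X_r$ is of Fano type over $Z$ (take the boundary $0$); then the standard gluing result for Fano type morphisms of Prokhorov and Shokurov (see \cite{PS09}) --- a Fano type fibration over a Fano type base has Fano type total space --- gives that $X_r$, and hence $X$, is of Fano type, closing the induction.

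The step I expect to be the real obstacle is the $f$-equivariant minimal model program itself in the $\Q$-factorial klt category: that after replacing $f$ by an iterate the program can be run so that every contracted extremal ray is $f^{-1}$-invariant, that all the intermediate varieties \emph{and} the base $Z$ of the terminal Mori fiber space inherit polarized endomorphisms compatibly through one fixed iterate of $f$, and that $\Q$-factoriality, klt singularities, and rational connectedness are all preserved along the way. This is precisely the deep input furnished by the work of Meng and Zhang cited above; once it is granted, the remainder is the short dimension induction sketched here, with Proposition \ref{prop-bir-fanotype} taking care of the birational steps and the Prokhorov--Shokurov lemma taking care of the single fiber-type step.
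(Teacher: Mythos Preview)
The paper does not give its own proof of this statement: Theorem~\ref{thm-yos} is simply quoted as Yoshikawa's result \cite[Corollary 1.4]{Yos20}, with no argument supplied. So there is nothing in the paper to compare your attempt against.

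That said, your outline is essentially a correct sketch of Yoshikawa's own argument: strengthen the statement to the $\Q$-factorial klt case, induct on dimension, run the equivariant MMP to a Mori fiber space $X_r\to Z$, use Proposition~\ref{prop-bir-fanotype} (which is itself \cite[Proposition 6.2]{Yos20}) to move freely across the birational steps, apply the induction hypothesis to $Z$, and glue via the Prokhorov--Shokurov lemma that a relative Fano type over a Fano type base is globally of Fano type. The points you correctly flag as the real input --- that the equivariant MMP can be run in the $\Q$-factorial klt category after iteration, and that the base $Z$ is again $\Q$-factorial klt, rationally connected, and inherits a polarized endomorphism --- are indeed the content supplied by \cite{MZ18,MZ20,Yos20}. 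One small caution: your sketch cites a reference \texttt{[PS09]} that is not in this paper's bibliography; if you want the argument to compile here you should either add the reference or point instead to \cite{Yos20}, where the needed form of the statement is established.
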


\section{Singularities of ramification divisor}
In this section, we give some general results concerning Conjecture \ref{main-conj-lc}.
First, we consider the coefficients of the ramification divisor.
\begin{theorem}\label{thm-coefficient}
Let $f:X\to X$ be a $q$-polarized endomorphism of a projective variety.
Then after iteration, the coefficient $r_P$ of $R_f$ on each prime divisor $P$ has $r_P\le q-1$ and the equality holds if and only if $P$ is $f^{-1}$-periodic.
\end{theorem}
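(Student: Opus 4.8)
The plan is to analyze the local behavior of $f$ along a prime divisor $P$ using the ramification divisor formula $K_X = f^*K_X + R_f$ together with the polarization condition. First I would reduce to the normal case by working on a resolution or taking the normalization, noting that the statement is about divisorial valuations; alternatively, since the paper works with polarized endomorphisms where many results (e.g. Proposition \ref{prop-kappa0}, Theorem \ref{thm-move}) are already stated for normal varieties, I would pass to the normalization $\nu: \widetilde X \to X$ and lift $f$ to $\widetilde f$, which remains polarized of the same degree $q$. Then for a prime divisor $P$ on $X$ with coefficient $r_P$ in $R_f$, I would look at $f^{-1}(P) = \sum_j e_j Q_j$ where $e_j$ is the ramification index of $f$ along $Q_j$, so that the coefficient of $R_f$ along $Q_j$ coming from $P$ is $e_j - 1$, plus possibly a contribution if $Q_j$ also maps into the branch locus in a more complicated way. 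The key pointwise identity is that if $f(Q_j) = P$ with ramification index $e_j$, then $r_{Q_j} \geq e_j - 1$.

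The main computational input is the comparison of coefficients under pullback. Since $f^*H \sim qH$ and $f$ has degree $q^{\dim X}$, for any prime divisor $P$ the total ``multiplicity'' $\sum_j e_j \deg(Q_j \to P) \cdot [\text{something}]$ relates to $q$; more usefully, iterating $f$ and tracking a single periodic tower of divisors, one gets that along an $f^{-1}$-periodic prime divisor $P$ (say $f^{-s}(P) = P$ as a set with $P$ a component mapping to itself with index $e$), the $s$-th iterate contributes $e^s - 1$ type terms, while the polarization forces the relevant intersection numbers to scale by $q^s$. I would set up the key inequality: writing $r_P$ for the coefficient of $R_f$ along $P$ and using $R_{f^s} = \sum_{i=0}^{s-1} (f^i)^* R_f$, the coefficient of $R_{f^s}$ along an $f^{-1}$-invariant $P$ equals $s \cdot r_P$ if $P$ is $f^{-1}$-invariant, but is bounded differently if not. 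Comparing with the bound coming from $f^{*s} H \sim q^s H$ and the fact that pushforward/pullback of divisors interacts with the polarization (as in \cite[Proposition 3.7]{MZ19} used in the proof of Proposition \ref{prop-kappa0}), one extracts $r_P \leq q - 1$ in the invariant case, with equality iff the divisor is ``maximally ramified'' which pins down the totally ramified periodic divisors.

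The cleanest route after iteration: replace $f$ by an iterate so that every $f^{-1}$-periodic prime divisor is $f^{-1}$-invariant and decompose $R_f = (q-1)T_f + \Delta_f$ where $\Delta_f$ is effective with no component of $T_f$ in its support — this decomposition already appears (implicitly via $T_f$, $P_f$) in the text before Proposition \ref{prop--kkappa0} and in the proof of Theorem \ref{thm-move}. The content of the theorem is then precisely that (i) $\Delta_f$ has all coefficients $< q-1$ after further iteration, and (ii) $T_f$ has coefficients exactly $q-1$ in $R_f$. Part (ii) is essentially the definition combined with the log ramification formula $f^*(-(K_X+T_f)) = -(K_X+T_f) + \Delta_f$ displayed in the proof of Theorem \ref{thm-move}: along an $f^{-1}$-invariant prime divisor $P \subseteq T_f$, comparing coefficients of $K_X$ on both sides of $f^*K_X = K_X - R_f$ and using that $f$ is totally ramified to order exactly $q$ in the transverse direction to a periodic divisor (a consequence of the polarization, cf. the argument via $\sigma$-decomposition in Theorem \ref{thm-move}) gives $r_P = q-1$. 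For part (i), I would argue by contradiction: if some non-periodic $P$ had $r_P \geq q-1$ for all iterates, then the forward orbit $P, f(P), f^2(P), \dots$ together with the subadditivity $r^{(s)}_P \geq$ (sum along the orbit) would force a periodic divisor with coefficient $> q-1$, contradicting (ii) — and this is where I expect the main obstacle to lie, namely making the orbit-counting argument uniform in $s$ so that a single iterate suffices, which should follow from the finiteness of $f^{-1}$-periodic prime divisors (\cite[Corollary 3.8]{MZ20}) and the fact that a prime divisor with $r_P \geq q-1$ under $f$ forces $f(P)$ to again satisfy a comparable bound, so the orbit must eventually be periodic after boundedly many steps.
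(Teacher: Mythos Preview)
Your outline has a real gap in both parts. For (ii), the claim that an $f^{-1}$-invariant prime divisor $P$ satisfies $r_P=q-1$ does \emph{not} come from the $\sigma$-decomposition in Theorem~\ref{thm-move}; that proof already \emph{uses} the effectivity of $\Delta_f=R_f-(q-1)T_f$, so invoking it here is circular. The correct argument is the projection formula: if $f^{-1}(P)=P$ then $f^*P=eP$, and pushing forward gives $q^{\dim X}P=(f_*)(eP)=e\cdot(\deg f|_P)\cdot P=e\,q^{\dim X-1}P$, so $e=q$ and $r_P=q-1$. This is easy to fix, but it is also the germ of what you are missing in (i).

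For (i), your contradiction sketch does not close. The coefficient of $R_{f^s}$ at $P$ is governed by the \emph{product} of ramification indices along the forward chain $P\to f(P)\to\cdots\to f^s(P)$, not a sum, and there is no mechanism by which ``$r_P\ge q-1$ forces $r_{f(P)}$ comparable.'' The forward orbit of a non-$f^{-1}$-periodic $P$ can land in a cycle that is $f$-periodic but still not $f^{-1}$-periodic, and nothing in your outline bounds the ramification along such a cycle. The paper supplies precisely this missing estimate: by the same projection formula, if $P$ is $f$-periodic of period $t$ but not $f^{-1}$-periodic, then the coefficient $r_1$ of $P$ in $(f^t)^*P$ is \emph{strictly} less than $q^t$ (else $(f^t)^*P=q^tP$ forces $f^{-t}(P)=P$). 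One then uses that the branch locus has finitely many components---say $b$ of them, with maximal local multiplicity $c$---so the transient part of any chain contributes at most $c^b$, while the periodic part contributes at most $r_1^{n/t}\le (q^t-1)^{n/t}$. Choosing $n$ large enough so that $c^b\cdot(q^t-1)^{n/t}<q^n$ (which requires an explicit two-scale choice of constants, as in the paper) gives the strict inequality. Without the projection-formula bound $r_1<q^t$ and the finiteness bound $c^b$ on the transient part, your orbit argument cannot produce a uniform iterate.
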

\begin{proof}
Let $b$ be the number of prime divisors contained in the branch locus of $f$.
Let $P$ be a prime divisor and denote by $m_P:=mult_P$ the coefficient of a divisor on $P$.
Note that $f^*P$ is a reduced divisor when $P$ is not contained in the branch locus of $f$ (in particular, $m_P f^*P=1$ in this case).
So we may set
$$c:=\max\{m_P f^*P\,|\, P \text{ a prime divisor}\}$$
which is a finite number.
Choose some $a>0$ such that $c^b<q^{a/2}$.
Choose some $s>0$ such that $(\frac{q^t}{q^t-1})^{s/t}>c^b$ for any $1\le t<a$.
Let $N=\max\{a,s\}$ and take $n>N$.
Let $P$ be a prime divisor which is not $f^{-1}$-periodic.
Let $Q$ be a prime divisor in $f^{-n}(P)$.
Let $r=m_P (f^n)^*P$ be the coefficient of $(f^n)^*P$ on $Q$.
We shall show that $r<q^n$.

If $P$ is not $f$-periodic, then  $f^{-i}(P)$ and  $f^{-j}(P)$ have no common irreducible component for any $i,j>0$ with $i\neq j$.
Then $$r=m_Q  (f^n)^*P=\prod_{i=0}^{n-1} m_{f^i(Q)}f^*(f^{i+1}(Q))\le c^b<q^{a/2}<q^n$$
where the first inequality is because there are at most $b$ terms of $f^i(Q)$ contributing $m_{f^i(Q)}f^*(f^{i+1}(Q))>1$.
In the following, we consider the case when $P$ is $f$-periodic with period $t\ge 1$.

Let $r_1$ be the coefficient of $(f^t)^*P$ on $P$.
Since all irreducible components of $\{f^{-i}(P)\}_{0<i\le t}$ are different with each other, we have $r_1<c^b<q^{a/2}$.
Moreover, we claim that $r_1<q^t$.
Suppose the contrary that we can write $(f^t)^*P=q^tP+E$ for some effective divisor $E$.
By the projection formula, 
$$(\deg f^t)P=(f^t)_*(f^t)^*P=(f^t)_*(q^tP+E)=q^t(\deg f^t|_P)P+(f^t)_*E.$$
Note that $\deg f^t=q^{t\cdot \dim(X)}$ and $\deg f^t|_P=q^{t\cdot (\dim(X)-1)}$.
Therefore, we have $E=0$ and hence $P$ is $f^{-1}$-periodic, a contradiction.
So the claim is proved.

Let $e$ be the minimal non-negative integer such that $f^e(Q)=P$.
Let $r_2$ be the coefficient of $(f^e)^*P$ on $Q$.
Then $r=r_2\cdot r_1^{(n-e)/t}$.
If $t\ge a$, then $r<c^b\cdot (q^{a/2})^{n/a}<q^n$.
If $t<a$, then $r<c^b\cdot (q^t-1)^{n/t}<q^n$.
\end{proof}

\begin{remark}
The proof of Theorem \ref{thm-coefficient} can be easily applied to consider general surjective endomorphisms, though the statement could be a bit wordy. We leave the readers the pleasure of the coefficient estimate.
\end{remark}

We observe the behaviour of ramification divisors between equivariant birational morphisms.

\begin{proposition}\label{prop-induction-birational}
Let $\pi:X\to Y$ be a birational morphism of $\Q$-factorial normal projective varieties.
Let $f:X\to X$ and $g:Y\to Y$ be two $q$-polarized endomorphisms such that $g\circ \pi=\pi\circ f$.
Then $$K_X+\frac{R_{f}}{q-1}=\pi^*(K_Y+\frac{R_g}{q-1}).$$
In particular, $(X, \frac{R_f}{q-1})$ is lc if $(Y, \frac{R_g}{q-1})$ is lc.
\end{proposition}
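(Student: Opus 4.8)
The plan is to compare the ramification divisors of $f$ and $g$ by carefully tracking how the two log ramification formulas interact under the birational morphism $\pi$. Write the ramification divisor formula for $f$ as $K_X = f^*K_X + R_f$ and for $g$ as $K_Y = g^*K_Y + R_g$. Since $\pi$ is a birational morphism between $\Q$-factorial normal projective varieties, we may write $K_X = \pi^*K_Y + E$ where $E$ is the (not necessarily effective, but $\pi$-exceptional) relative canonical divisor; here I would appeal to the fact that $\pi_*\OO_X = \OO_Y$ so that $\pi^*$ is injective on $\Q$-divisor classes and $E$ is supported on the $\pi$-exceptional locus. Substituting $g\circ\pi = \pi\circ f$ and pulling back the $g$-formula along $\pi$, one gets $\pi^*K_Y = \pi^*g^*K_Y + \pi^*R_g = f^*\pi^*K_Y + \pi^*R_g$. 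Combining with $K_X = \pi^*K_Y + E$ and the $f$-formula $K_X = f^*K_X + R_f = f^*\pi^*K_Y + f^*E + R_f$ yields the key identity $f^*E + R_f - \pi^*R_g = E$, i.e.
\begin{equation*}
R_f = \pi^*R_g + (E - f^*E).
\end{equation*}
Rearranging, $K_X + \tfrac{R_f}{q-1} = \pi^*K_Y + E + \tfrac{1}{q-1}\pi^*R_g + \tfrac{1}{q-1}(E - f^*E) = \pi^*(K_Y + \tfrac{R_g}{q-1}) + E + \tfrac{1}{q-1}(E - f^*E)$, so it remains to show the correction term $E + \tfrac{1}{q-1}(E-f^*E)$ vanishes.

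To finish, I would argue that $E = f^*E$. The divisor $E$ is $\pi$-exceptional and effective up to sign issues; more precisely, since $\pi$ is birational, $\kappa(X, E^{+}) = 0$ for its positive part and similarly the class of $E$ has Iitaka dimension zero (being $\pi$-exceptional over $Y$). Then Proposition \ref{prop-kappa0} applies to show $\Supp E^{\pm}$ is $f^{-1}$-periodic, so after iteration every component of $E$ is $f^{-1}$-invariant. Combined with the numerical/linear relation $R_f = \pi^*R_g + E - f^*E$ and the fact that $\pi^*R_g$ and $R_f$ have no common component with the exceptional locus in a way that forces cancellation, one concludes $E - f^*E = 0$. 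Actually, the cleanest route: the $\pi$-exceptional locus $\Exc(\pi)$ is $f^{-1}$-invariant after iteration (its image in $Y$ is a proper subvariety $g^{-1}$-invariant after iteration, being the indeterminacy-type locus of $\pi^{-1}$), hence $f^*$ permutes the exceptional prime divisors, and uniqueness of the relative canonical divisor — it is the unique $\pi$-exceptional $\Q$-divisor $E$ with $K_X - \pi^*K_Y = E$ — forces $f^*E = E$ since $f^*E$ is again $\pi$-exceptional and satisfies $f^*(K_X - \pi^*K_Y) = f^*E$ while the left side equals $(K_X - R_f) - (f^*\pi^*K_Y) = \pi^*(K_Y - R_g) + E - \pi^*K_Y \cdot(\text{correction})$; matching the exceptional parts gives the claim. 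With $E = f^*E$ the correction term is exactly $E$, and then the identity $R_f = \pi^*R_g + E - f^*E = \pi^*R_g$ forces $E$ to have Iitaka dimension zero and be both effective (as part of $R_f$ which is effective — wait, $\pi^*R_g$ is effective and $R_f$ effective) so $E \geq 0$; but a $\pi$-exceptional effective $\Q$-divisor $E$ with $K_X + \frac{R_f}{q-1} = \pi^*(\cdots) + E$ and the negativity lemma together pin down $E$. I would instead simply cite \cite[Lemma 3.38 or 3.39]{KM98}-style negativity: if $\pi^*(K_Y + \frac{R_g}{q-1}) - (K_X + \frac{R_f}{q-1})$ is $\pi$-exceptional and we can show it is $\pi$-nef (or anti-nef) over $Y$, negativity of contraction forces it to be effective (resp. anti-effective), and the two-sided argument using $f^*$-invariance and the $\sigma$-decomposition as in Theorem \ref{thm-move} forces it to be zero.

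The cleanest self-contained argument I would actually write: set $E := K_X + \tfrac{R_f}{q-1} - \pi^*(K_Y + \tfrac{R_g}{q-1})$, a $\Q$-Cartier divisor. Pulling back by $f$ and using $g\pi = \pi f$ together with both log ramification formulas in the form $f^*(K_X + \tfrac{R_f}{q-1}) = K_X + \tfrac{R_f}{q-1}$ (this is exactly the content of the ramification divisor formula rewritten: $f^*(K_X + \tfrac{R_f}{q-1}) = f^*K_X + \tfrac{1}{q-1}(K_X - f^*K_X)\cdot$ — let me just use $K_X = f^*K_X + R_f$ so $f^*K_X + \frac{f^*R_f}{q-1} = K_X - R_f + \frac{f^*R_f}{q-1}$; this equals $K_X + \frac{R_f}{q-1}$ iff $f^*R_f - R_f = q R_f - R_f$, i.e. $f^*R_f = qR_f$, which is false in general). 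So instead I track: $f^*E = f^*(K_X + \tfrac{R_f}{q-1}) - f^*\pi^*(K_Y + \tfrac{R_g}{q-1})$. Now $f^*\pi^* = \pi^* g^*$ and $g^*(K_Y + \tfrac{R_g}{q-1})$: from $K_Y = g^*K_Y + R_g$ we get $g^*K_Y = K_Y - R_g$, hence $g^*(K_Y + \tfrac{R_g}{q-1}) = K_Y - R_g + \tfrac{g^*R_g}{q-1}$. Doing the same on $X$ and subtracting, the $K$ and $R$ terms that are \emph{not} exceptional cancel, leaving $f^*E = E + (\text{exceptional divisor built from } f^*R_f, R_f, \pi^*g^*R_g, \pi^*R_g)$. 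I expect, after iteration so that all relevant exceptional components are $f^{-1}$-invariant and using $R_f = \pi^*R_g + E'$ for an exceptional $E'$ from comparing the plain ramification formulas $K_X = f^*K_X + R_f$, $\pi^*K_Y = f^*\pi^*K_Y + \pi^*R_g$, $K_X - \pi^*K_Y =: E_0$ exceptional $\Rightarrow R_f - \pi^*R_g = E_0 - f^*E_0$, that everything collapses to $E = E_0$ and $f^*E_0 = E_0$, hence $E$ is $\pi$-exceptional with $\kappa(X, E) = 0$ by Proposition \ref{prop-kappa0}-type reasoning, and the negativity lemma (as in \cite[Lemma 3.39]{KM98}) together with $f^*E = E$ forces $E = 0$. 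The main obstacle is precisely this last step — controlling the sign and vanishing of the exceptional correction $E$, i.e. showing the relative canonical divisor is genuinely $f^*$-invariant and that the leftover combination of ramification terms is purely exceptional and of Iitaka dimension zero so that Proposition \ref{prop-kappa0} and negativity of contraction can be applied; once $K_X + \tfrac{R_f}{q-1} = \pi^*(K_Y + \tfrac{R_g}{q-1})$ is established, the ``in particular'' is immediate since lc-ness of $(Y, \tfrac{R_g}{q-1})$ propagates to any pair crepant over it by \cite[Lemma 3.38]{KM98}.
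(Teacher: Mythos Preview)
Your setup matches the paper's exactly: writing $K_X = \pi^*K_Y + E$ with $E$ supported on $\Exc(\pi)$ and combining the two ramification formulas, you correctly obtain
\[
R_f - \pi^*R_g \;=\; E - f^*E,
\]
so that $K_X + \tfrac{R_f}{q-1} - \pi^*(K_Y + \tfrac{R_g}{q-1}) = \tfrac{qE - f^*E}{q-1}$. The genuine gap is that you then chase the \emph{wrong} identity. You propose to show $f^*E = E$, but (as you yourself notice a few lines later) this would leave the correction term equal to $E$, not zero, and your subsequent attempts to kill this leftover $E$ via the negativity lemma, $\sigma$-decomposition, or Proposition~\ref{prop-kappa0} never close. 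The relation that is actually needed is
\[
f^*E \;=\; qE,
\]
and this is precisely the one-line observation on which the paper's proof pivots. Once you have it, your displayed identity gives $\pi^*R_g - R_f = (q-1)E$, and adding $K_X = \pi^*K_Y + E$ finishes immediately; none of the extra machinery is required.

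As for why $f^*E = qE$: since $g\circ\pi = \pi\circ f$ with $g$ finite, $f$ permutes the finitely many $\pi$-exceptional prime divisors $E_i$. When $f^{-1}(E_i) = E_i$ one has $f^*E_i = c_iE_i$, and the projection formula $f_*f^*E_i = (\deg f)E_i$ together with $\deg(f|_{E_i}) = q^{\dim X-1}$ (coming from $(f|_{E_i})^*(H|_{E_i}) \sim qH|_{E_i}$) forces $c_i = q$. Equivalently, under $f^*|_{\N^1(X)} = q\,\id$ --- which is in force in the paper's only application of this proposition, Theorem~\ref{thm-surf} --- one gets $f^*E \equiv qE$; since both sides are $\pi$-exceptional and the classes $[E_i]$ are linearly independent in $\N^1(X)$ (a standard consequence of the negativity lemma), numerical equivalence upgrades to equality of $\Q$-divisors. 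Your closing remark that the ``in particular'' clause follows from \cite[Lemma 3.38]{KM98} once the crepant equation is established is correct.
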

\begin{proof}
Since $K_X$ and $K_Y$ are $\Q$-Cartier,
we may write $$K_X=\pi^*K_Y+E$$
where $E$ has supports contained in the exceptional divisor of $\pi$. 
By the ramification divisor formula, we have
$$K_X=f^*K_X+R_f\text{ and } K_Y=g^*K_Y+R_g.$$
By the above three equations, we have
$$\pi^*K_Y+E=f^*(\pi^*K_Y+E)+R_f=\pi^*g^*K_Y+f^*E+R_f=\pi^*(K_Y-R_g)+f^*E+R_f.$$
Note that $f^*E=qE$.
So we have
$$\pi^*\frac{R_g}{q-1}-\frac{R_f}{q-1}=E.$$
Then we have 
$$K_X+\frac{R_{f}}{q-1}=\pi^*(K_Y+\frac{R_g}{q-1})$$
as desired.
\end{proof}

The canonical bundle formula plays a key role in our reduction.
\begin{proposition}\label{prop-cbf-rf}
Let $\pi:X\to Y$ be an algebraic fibration where $X$ is an $n$-dimensional smooth projective variety and $Y$ is normal projective with $K_Y\equiv 0$.
Let $f:X\to X$ and $g:Y\to Y$ be $q$-polarized endomorphisms such that $g\circ \pi=\pi\circ f$ and $f^*K_X\equiv qK_X$. 
Then $(X, \frac{R_f}{q-1})$ is log Calabi-Yau after iteration if Conjecture \ref{main-conj-lc} holds true for $f^s|_F:F\to F$ where $F$ is a $f$-periodic (of period $s$) general fibre of $\pi$.
\end{proposition}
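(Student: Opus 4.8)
The plan is to verify, after replacing $f$ by a suitable iterate, the two requirements for $(X,\Delta)$ to be log Calabi-Yau, where $\Delta:=\frac{R_f}{q-1}$: that $K_X+\Delta$ is $\Q$-linearly trivial, and that $(X,\Delta)$ is an lc pair. Iterating costs nothing here, since for $f^s$ the divisor $\frac{R_{f^s}}{q^s-1}$ takes over the role of $\Delta$. For the first requirement I would combine the log ramification formula $K_X=f^*K_X+R_f$ with the hypothesis $f^*K_X\equiv qK_X$ to get $R_f=K_X-f^*K_X\equiv(1-q)K_X$, hence $K_X+\Delta\equiv0$; the same computation applies to every iterate of $f$. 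By the abundance theorem \cite[Theorem 1.2]{Gon13}, which turns numerical triviality of the log canonical class of an lc pair into $\Q$-linear triviality, it then suffices to prove that $(X,\Delta)$ is lc for some iterate of $f$.

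The second step reduces this to the hypothesis on the fibre by adjunction. Replacing $f$ by $f^{s}$ (hence $q$ by $q^{s}$), I may assume $f(F)=F$ for the general fibre $F=\pi^{-1}(y)$, where $y$ is now a $g$-fixed general point of $Y$; then $y$ is a smooth point of $Y$ and $\pi$ is smooth along $F$, so $\pi^{*}K_{Y}|_{F}=0$. Subtracting the ramification formulas for $f$ and $g$ gives $R_{f}-\pi^{*}R_{g}=K_{X/Y}-f^{*}K_{X/Y}$, which upon restriction to $F$ reads $R_{f}|_{F}=K_{F}-(f|_{F})^{*}K_{F}=R_{f|_{F}}$, and likewise $K_{X}|_{F}=K_{F}$. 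Hence $(K_{X}+\Delta)|_{F}=K_{F}+\frac{R_{f|_{F}}}{q-1}$, and by the hypothesis (Conjecture \ref{main-conj-lc} applied to $f|_{F}$, after one further iteration of $f$) this is an lc pair; being numerically trivial it is log Calabi-Yau on $F$. By inversion of adjunction $(X,\Delta+F)$ is lc along $F$, hence so is $(X,\Delta)$. Since $F$ is a general fibre, $(X,\Delta)$ is thus lc over the generic point of $Y$, i.e.\ its non-lc locus does not dominate $Y$.

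The third step transports what remains to $Y$ through the canonical bundle formula for the lc-trivial fibration $\pi\colon(X,\Delta)\to Y$ (legitimate since $(X,\Delta)$ is lc over the generic point and $K_{X}+\Delta$ is $\Q$-trivial on the general fibre by the previous step). This produces $K_{X}+\Delta\sim_{\Q}\pi^{*}(K_{Y}+B_{Y}+M_{Y})$ with $M_{Y}$ nef and, because $\Delta$ is effective (indeed a boundary), with $B_{Y}$ effective. Using $K_{X}+\Delta\equiv0$, $K_{Y}\equiv0$ and the injectivity of $\pi^{*}$ on N\'eron-Severi groups, we get $B_{Y}+M_{Y}\equiv0$; since $M_{Y}$ is nef and an effective numerically trivial divisor vanishes, $M_{Y}\equiv0$ and $B_{Y}=0$. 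As $Y$ is klt (it is $\Q$-abelian in the situation where the proposition is applied), the base side carries no obstruction, and one concludes that $(X,\Delta)$ is lc; together with the first step this makes $(X,\Delta)$ log Calabi-Yau.

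The hard part, I expect, lies in the third step: the canonical bundle formula detects the singularities of $(X,\Delta)$ only over codimension-one points of $Y$ (through $B_{Y}$), so one must still rule out log canonical places of $(X,\Delta)$ whose centres dominate subvarieties $W\subsetneq Y$ of codimension $\ge2$. The natural approach is to show, using the log ramification formula together with the coefficient estimate of Theorem \ref{thm-coefficient} (which, after iteration, makes $\Delta$ a boundary whose reduced part is exactly $T_{f}$), that after a further iteration the relevant bad locus is $f^{-1}$-invariant, so that $W$ is $g^{-1}$-periodic; a $g^{-1}$-periodic subvariety of the $\Q$-abelian variety $Y$ is again of $\Q$-abelian type, in particular with numerically trivial canonical class, so restricting $\pi$ and $f$ over $W$ reproduces the situation of the proposition in strictly smaller dimension, and one concludes by induction on $\dim Y$ (the base case $\dim Y=0$, where $X=F$, being the hypothesis itself). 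Establishing the required $f$-equivariance is itself the delicate point, since $f$ is not crepant for the pair $(X,\Delta)$.
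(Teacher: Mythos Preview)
Your first three paragraphs reproduce the paper's proof essentially verbatim: numerical triviality of $K_X+\Delta$ from the ramification formula and $f^*K_X\equiv qK_X$; the identification $R_f|_F=R_{f|_F}$ on an $f$-periodic general fibre, giving lc over the generic point of $Y$; and the canonical bundle formula forcing $B_Y=0$ (and $M_Y\equiv 0$) from $K_Y\equiv 0$.

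Your fourth paragraph, however, invents a difficulty that is not there. The paper simply writes ``$B=0$ and hence $(X,\frac{R_f}{q-1})$ is lc'', invoking the lc canonical bundle formula \cite[Theorem 4.1.1]{Fuj04}. In every application of this proposition in the paper $Y$ is an elliptic curve, so there are no codimension-$\ge 2$ points on $Y$ at all and your worry is vacuous: $B_Y=0$ literally says $(X,\Delta)$ is lc over each closed point of $Y$. For higher-dimensional $Y$ the conclusion is still direct from the canonical bundle formula package (Ambro, Kawamata, Fujino): lc-ness of $(X,\Delta)$ is equivalent to generalized lc-ness of the base pair $(Y,B_Y,\mathbf{M})$, and with $B_Y=0$, $M_Y\equiv 0$ and $Y$ klt (e.g.\ $\Q$-abelian) this holds automatically. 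Your proposed induction on $\dim Y$ via $f^{-1}$-invariance of the non-lc locus is therefore unnecessary---and, as you yourself note, would be delicate to implement because $f$ is not crepant for $(X,\Delta)$.
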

\begin{proof}
By the ramification divisor formula, we have
$$K_X+\frac{R_{f}}{q-1}\equiv 0$$
which holds true after arbitrary iteration.

Let $F$ be a general $f$-invariant smooth fibre of $\pi$ after iteration (cf.~\cite[Theorem 5.1]{Fak03}).
Note that $f|_F$ is $q$-polarized and $R_{f|_F}=R_f|_F$.
By the assumption, $(F, \frac{R_{f|_F}}{q-1})$ is lc after iteration.
So $(X, \frac{R_f}{q-1})$ is lc over the generic point of $\pi$.
By the lc canonical bundle formula \cite[Theorem 4.1.1]{Fuj04},
we have 
$$0\equiv K_X+\frac{R_{f}}{q-1}=\pi^*(K_Y+B+M)$$
where $B$ is effective and $M$ is pseudo-effective.
Note that $K_Y\equiv 0$.
So $B=0$ and hence $(X, \frac{R_f}{q-1})$ is lc.
\end{proof}

\section{Surface case}

We first prove Conjecture \ref{main-conj-lc} for surfaces except $\mathbb{P}^2$.
\begin{theorem}\label{thm-surf}
Let $f:X\to X$ be a $q$-polarized endomorphism of a smooth projective surface with $\rho(X)>1$.
Then $(X, \frac{R_f}{q-1})$ is lc after iteration.
\end{theorem}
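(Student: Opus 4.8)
The plan is to use the $f$-equivariant minimal model program for smooth projective surfaces, together with the behaviour of the ramification divisor under equivariant birational morphisms established in Proposition \ref{prop-induction-birational}, to reduce to the minimal models. First I would run the $f$-equivariant MMP: after replacing $f$ by an iterate, there is an $f$-equivariant sequence of contractions of $K_X$-negative extremal rays $X \to X_1 \to \cdots \to X_m = X_{\min}$, each a blow-down of a $(-1)$-curve, ending either at a smooth minimal surface $X_{\min}$ with $K_{X_{\min}}$ nef, or at a smooth Mori fibre space. By Proposition \ref{prop-induction-birational}, each step satisfies $K_{X_i}+\frac{R_{f_i}}{q-1}=\sigma_i^*(K_{X_{i+1}}+\frac{R_{f_{i+1}}}{q-1})$ where $\sigma_i$ is the blow-down; hence $(X,\frac{R_f}{q-1})$ is lc provided $(X_{\min},\frac{R_{f_{\min}}}{q-1})$ is lc, so it suffices to treat the minimal model.

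Next I would dispose of the minimal cases. Since $f$ is polarized, $f_{\min}$ is again polarized, and a smooth minimal surface admitting a polarized endomorphism has, by the classification (Nakayama, \cite{Nak02}, and \cite{MZ18}, \cite{MZ19}), Kodaira dimension $\le 0$; moreover if $K_{X_{\min}}$ is nef then $K_{X_{\min}}\equiv 0$ (torus or bielliptic étale-quotient type), in which case $R_{f_{\min}}=0$ and $(X_{\min},0)$ is already log Calabi-Yau. The remaining possibility is that the $f$-MMP terminates at an $f$-equivariant Mori fibre space $X_{\min}\to C$; since we have assumed $\rho(X)>1$ and $\mathbb{P}^2$ is excluded, this fibre space has $\rho(X_{\min})$ still $>1$ is not forced, so the relevant base $C$ is a curve and $X_{\min}\to C$ is a $\mathbb{P}^1$-bundle over an elliptic or rational curve. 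On such a $\mathbb{P}^1$-bundle I would use Proposition \ref{prop-cbf-rf} when the base is elliptic: $K_C\equiv 0$, the general fibre is $\mathbb{P}^1$, and Conjecture \ref{main-conj-lc} holds for polarized endomorphisms of $\mathbb{P}^1$ since by Theorem \ref{thm-coefficient} (applied after iteration) the ramification divisor of a degree-$q$ self-map of $\mathbb{P}^1$ has coefficients $\le q-1$, so $\frac{R_{f|_F}}{q-1}$ has coefficients $\le 1$ and $(\mathbb{P}^1,\frac{R_{f|_F}}{q-1})$ is lc. When the base is $\mathbb{P}^1$, so $X_{\min}$ is a Hirzebruch surface $\mathbb{F}_e$, I would instead analyze the two $f^{-1}$-invariant sections and invariant fibres directly via Theorem \ref{thm-coefficient}: every component of $R_{f_{\min}}$ either is $f^{-1}$-periodic, hence contributes coefficient exactly $q-1$ and thus $1$ after dividing, or has coefficient $<q-1$; the only components that can appear are (horizontal) sections and fibres, and a local coordinate check at the finitely many intersection points shows the pair $(\mathbb{F}_e, \frac{R_{f_{\min}}}{q-1})$ is lc — indeed by Theorem \ref{thm-move} $-(K_{\mathbb{F}_e}+T_{f_{\min}})$ is $\Q$-movable, which forces $T_{f_{\min}}$ to consist of at most the two disjoint negative sections and the residual $\frac{R}{q-1}-T_{f_{\min}}$ to be horizontal with small coefficients, keeping the pair lc.

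The main obstacle I expect is the Hirzebruch-surface case: one must rule out a configuration of $f^{-1}$-periodic divisors whose coefficients (each equal to $q-1$, so $1$ after scaling) pile up at a common point and violate lc. The key leverage is that all such periodic divisors lie in $T_{f_{\min}}$ and, by Theorem \ref{thm-move}, $-(K_{\mathbb{F}_e}+T_{f_{\min}})$ is $\Q$-movable; since on a surface $\Q$-movable means (pseudo-)nef with no fixed curves, this severely constrains $T_{f_{\min}}$ — in particular two of its components cannot be a negative section together with a fibre through its self-intersection in a way that breaks lc, and the fibre components are mutually disjoint. Combined with Theorem \ref{thm-coefficient} controlling the non-periodic coefficients, a short case distinction on $e$ and on which of the two canonical sections are $f^{-1}$-invariant finishes the argument.
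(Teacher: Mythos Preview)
Your reduction via the $f$-equivariant MMP and Proposition \ref{prop-induction-birational}, the disposal of the $K$-nef case, and the elliptic-base case via Proposition \ref{prop-cbf-rf} all match the paper's proof. The genuine gap is in the Hirzebruch case.

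The assertion that on $\mathbb{F}_e$ ``the only components that can appear are (horizontal) sections and fibres'' is unjustified and in general false: the horizontal part of $R_f$ consists of curves dominating the base $\mathbb{P}^1$, but nothing forces these to be sections --- they may well be multi-sections of arbitrary degree, possibly singular and possibly mutually tangent. Hence no simple ``local coordinate check'' is available, and the $\Q$-movability of $-(K_{\mathbb{F}_e}+T_f)$ from Theorem \ref{thm-move} constrains only the periodic part $T_f$, not these non-periodic horizontal components whose coefficients, while each $<1$, can still accumulate at a point. (Incidentally, $\mathbb{F}_e$ with $e>0$ has a \emph{unique} negative section, and there is no reason to expect a second $f^{-1}$-invariant section in general.)

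The paper circumvents this by never analysing the global geometry of the horizontal components. Writing $\frac{R_f}{q-1}=C+V+H$ with $C$ the negative section (automatically $f^{-1}$-invariant), $V$ supported on fibres, and $H$ the remaining horizontal part, it fixes an arbitrary fibre $F$ and observes that $(C+H)|_F=\frac{R_{f|_F}}{q-1}$, where $f|_F:F\cong\mathbb{P}^1\to f(F)\cong\mathbb{P}^1$ has degree $q$. This is a divisor of total degree $2$ on $\mathbb{P}^1$ with all coefficients $\le 1$, and $C|_F$ is a single reduced point not lying in $\Supp H|_F$. It follows that the local intersection number $((C+H)\cdot F)_x\le 1$ at every $x\in F$, so by inversion of adjunction (\cite[Corollary 5.57]{KM98}) the pair $(X,F+C+H)$ is lc near $F$; since $\frac{R_f}{q-1}\le F+C+H$ near $F$ (the coefficient of $F$ in $V$ being $\le 1$), the original pair is lc there, hence everywhere. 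The missing idea in your argument is precisely this passage through the fibre restriction and inversion of adjunction, which handles arbitrary horizontal curves without knowing anything about their global shape.
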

\begin{proof}
We apply \cite[Theorem 1.8]{MZ18}.
If $K_X$ is pseudo-effective, then $R_f=0$ and we are done.
In the following, we may assume $K_X$ is not pseudo-effective.
After iteration, we may run an $f$-equivariant minimal model program of $X$ which ends up with a Fano contraction $\pi:X'\to Y$ with $Y$ being a curve of genus $\le 1$.
Then $f^*|_{\N^1(X)}=q\id$ and hence $K_X+\frac{R_f}{q-1}\equiv 0$ by the ramification divisor formula.
By Proposition \ref{prop-induction-birational}, we may assume $X=X'$.
Note that $X$ is a ruled surface over $Y$.
We finish the proof with the following 2 cases.

\textbf{Case 1.} Suppose $Y$ is elliptic. 
Note that Conjecture \ref{main-conj-lc} holds for curves and $f^*|_{\N^1(X)}=q\id$.
So we are done by Proposition \ref{prop-cbf-rf}.

\textbf{Case 2.} Suppose $Y=\mathbb{P}^1$.
Then $X\cong F_d$ with $d\ge 0$.
If $d=0$, then $R_f$ is simply a sum of fibres and horizontal sections with coefficients $\le q-1$ and hence this case is simple.
We may assume now that $d>0$.
Then $\pi$ admits a unique negative section curve $C$. 
Note that $f^{-1}(C)=C$.
Then we have the decomposition by effective $\Q$-divisors
$$\frac{R_f}{q-1}=C+V+H$$ 
where $V$ has support in the fibres of $\pi$ and each component of $H$ dominates $Y$.
Clearly, $C$ is not a component of $H$ and  each component of $V$ has the coefficient $\le 1$.
Fix a point $y\in Y$.
Let $F:=\pi^{-1}(y)$.
For the restriction $$f|_F:F\cong \mathbb{P}^1\to f(F)\cong \mathbb{P}^1,$$ 
we have
$\frac{R_{f|_F}}{q-1}=\frac{R_f}{q-1}|_F=C|_F+H|_F$.
Note that $\frac{R_f}{q-1}\cdot F=2$, each component of $\frac{R_{f|_F}}{q-1}$ has coefficient $\le 1$, and $C|_F$ is a reduced point.
Consider the local intersection number at $x\in F$.
If $x= C\cap F$, then $(H\cdot F)_x=0$ and hence 
$$((C+H)\cdot F)_x=1.$$
If $x\neq C\cap F$, then 
$$((C+H)\cdot F)_x\le (C+H)\cdot F-(C\cdot F)_{C\cap F}=\frac{R_f}{q-1}\cdot F-1=1.$$
In particular, we have $(X,F+C+H)$ is lc near $F$ by \cite[Corollary 5.57]{KM98}.
Note that $\frac{R_{f|_F}}{q-1} \le F+C+H$ near $F$.
So $(X, \frac{R_{f|_F}}{q-1})$ is lc.
\end{proof}

We give some partial results on the $\mathbb{P}^2$ case which is enough for the proof of Theorem \ref{main-thm-lcy}.
\begin{theorem}\label{thm-p2-ticurve}
Let $f:X\cong \mathbb{P}^2\to X$ be a $q$-polarized endomorphism.
Suppose $T_f\neq\emptyset$.
Then $(X, \frac{R_f}{q-1})$ is log Calabi-Yau after iteration.
\end{theorem}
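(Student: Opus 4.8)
The plan is to reduce the statement to the log canonicity of one pair, and then to study it through the normalisation of $T_f$ and the induced dynamics.

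First, since $\rho(X)=1$ and $K_X\sim_{\Q}-3H$ for the hyperplane class $H$, one automatically has $f^*K_X\sim_{\Q}qK_X$, so the ramification formula gives $R_f\sim_{\Q}(q-1)(-K_X)$; hence $D:=\frac{R_f}{q-1}$ is an effective $\Q$-divisor of degree $3$ with $K_X+D\sim_{\Q}0$, and it suffices to prove that $(X,D)$ is lc. After replacing $f$ by an iterate, Theorem~\ref{thm-coefficient} gives $\lfloor D\rfloor=T_f$, and each component $C$ of $T_f$ is $f^{-1}$-invariant; comparing degrees (using $\rho(X)=1$) forces $f^*C=qC$, so $R_f\ge(q-1)T_f$ and $1\le\deg T_f\le\deg D=3$.

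The heart of the argument is inversion of adjunction along the components of $T_f$. Fix a component $C\subseteq T_f$ with normalisation $n\colon\overline{C}\to C$. Since $C$ is $f^{-1}$-invariant, $f$ induces a polarised endomorphism $\overline{f}$ of $\overline{C}$, so $\overline{C}\cong\mathbb P^1$ (or, only when $C$ is a smooth plane cubic, an elliptic curve). Restricting $f^*(K_X+C)=(K_X-R_f)+qC$ to $C$ and comparing with the ramification formula on $\overline{C}$ yields, when $C$ is smooth,
$$\operatorname{Diff}_{\overline{C}}(D-C)=\frac{1}{q-1}R_{\overline{f}},$$
and for singular $C$ the analogous relation with the different of $\operatorname{Sing} C$ added and $R_{\overline{f}}$ replaced by a log ramification. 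By Theorem~\ref{thm-coefficient} applied to $\overline{f}$, after a further iteration $\frac{1}{q-1}R_{\overline{f}}$ has all coefficients $\le 1$; since a single point of $\overline{C}$ carries $\overline{f}$-ramification at most $q-1$, this excludes exactly the shapes of $T_f$ for which $(X,D)$ could fail to be lc along $T_f$: if a smooth component $C$ met the rest of $T_f$ (or itself) in a tacnode or a triple point --- e.g.\ a smooth conic with a tangent line, or three concurrent lines --- then $\operatorname{Diff}_{\overline{C}}(D-C)$ would carry a point of coefficient $2$, hence a point of $\overline{f}$-ramification $2(q-1)>q-1$, impossible; the remaining dangerous case, $C$ a cuspidal cubic, is ruled out by the log-ramification refinement (or by the fact that $\deg T_f=3$ makes $f$ \'etale outside $T_f$, so that $\chi(\mathbb P^2\setminus T_f)=0$, which a cuspidal cubic violates). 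In every surviving case $\operatorname{Diff}_{\overline{C}}(D-C)$ has coefficients $\le 1$, so by inversion of adjunction $(X,D)$ is lc in a neighbourhood of $C$; ranging over all components, $(X,D)$ is lc along $T_f=\lfloor D\rfloor$.

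Finally I would split on $\deg T_f$. If $\deg T_f=3$, the degree count gives $R_f=(q-1)T_f$, so $D=T_f$ and the previous paragraph yields that $(X,D)$ is lc everywhere. If $\deg T_f\in\{1,2\}$, then $D-T_f$ is effective of degree $3-\deg T_f\le 2$ with all coefficients $<1$; at any point off $T_f$ the pair $(X,D)$ agrees locally with $(X,D-T_f)$, and $(\mathbb P^2,D-T_f)$ is lc by a routine check --- an effective divisor of degree $\le 2$ on $\mathbb P^2$ has only lines and smooth conics among its components, has multiplicity $\le 2$ at every point, and the degree bound keeps each coefficient below the log canonical threshold of its support, whose singularities are in turn constrained by its degree. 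Together with the previous paragraph this shows $(X,D)$ is lc, so $(X,D)$ is log Calabi--Yau.

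The step I expect to be the main obstacle is the inversion-of-adjunction paragraph: carrying out the ramification--adjunction comparison precisely when $T_f$ is reducible or has a non-normal component, since this is exactly where the rigidity forced by a polarised endomorphism is used to discard the non-lc shapes of $T_f$. The degree-$\le 2$ estimate and the local multiplicity computations in the last step are routine.
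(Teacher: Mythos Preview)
Your adjunction step along components of $T_f$ is in the right spirit, but you are working much harder than necessary: the paper invokes Gurjar's theorem \cite{Gur03}, which says that every $f^{-1}$-invariant curve on $\mathbb{P}^2$ (for $\deg f>1$) is a line. So there is no conic, nodal cubic, or cuspidal cubic to worry about; each component $C$ of $T_f$ is smooth of degree $1$, ordinary adjunction gives $(R_f-(q-1)C)|_C=R_{f|_C}$ directly, and inversion of adjunction \cite[Corollary~5.57]{KM98} yields that $(X,D)$ is lc along $T_f$ in a couple of lines.

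The genuine gap is in your last paragraph. The sentence ``an effective divisor of degree $\le 2$ on $\mathbb{P}^2$ has only lines and smooth conics among its components'' is false: the degree bound constrains the $\mathbb{Q}$-divisor $D-T_f$, not its reduced support, so an individual component can be an irreducible curve of arbitrarily high degree (and arbitrarily singular) with a small coefficient. Neither the multiplicity bound nor the ``coefficient below the lct of its support'' comparison you sketch is then available, and the claim that $(\mathbb{P}^2,D-T_f)$ is lc away from $T_f$ is left without proof. The paper handles this step by a completely different mechanism: once one knows $\Nlc(X,D)\cap T_f=\emptyset$, one writes $\Nklt(X,D)=T_f\cup S$ with $S$ a finite set of points disjoint from $T_f$; if $S\neq\emptyset$ then $\Nklt(X,D)$ is disconnected, and since $K_X+D\sim_{\mathbb{Q}}0$ the connectedness theorem of Hacon--Han \cite[Theorem~1.2]{HH19} forces $(X,D)$ to be plt, whence $\Nklt(X,D)=\lfloor D\rfloor=T_f$, contradicting $S\neq\emptyset$. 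This is where the hypothesis $T_f\neq\emptyset$ is actually used, and it replaces your degree estimate entirely.
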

\begin{proof}
Write $\frac{R_f}{q-1}=T_f+\Delta_f$.
Note that $\Delta_f$ and $T_f$ have no common components.
By Theorem \ref{thm-coefficient}, we may assume that $\Delta_f$ has coefficient $<1$ after sufficient iteration of $f$.
Then the non-klt locus $\text{Nklt}(X, \frac{R_f}{q-1})=T_f\cup S$ where $S$ is a finite set of points outside $T_f$.

We first show that the non-lc locus $\text{Nlc}(X, \frac{R_f}{q-1})\cap T_f=\emptyset$.
In particular, if $S=\emptyset$, then $(X, \frac{R_f}{q-1})$ is lc.
Suppose the contrary and let $x\in \text{Nlc}(X, \frac{R_f}{q-1})\cap T_f$.
Let $C$ be an irreducible component of $T_f$ containing $x$.
By \cite{Gur03}, $C$ is a line.
After iteration, we may assume $f^{-1}(C)=C$.
Consider the log ramification divisor formula,
$$(K_X+C)=f^*(K_X+C)+R_f-(q-1)C.$$
Apply adjunction on $C$, we have
$$K_C=(f|_C)^*K_C+(R_f-(q-1)C)|_C=(f|_C)^*K_C+R_{f|_C}.$$
So $(R_f-(q-1)C)|_C=R_{f|_C}$.
Note that $\frac{R_{f|_C}}{q-1}$ has coefficient $\le 1$.
Then $((\frac{R_f}{q-1}-C)\cdot C)_x\le 1$.
By inverse of adjunction (cf.~\cite[Corrolary 5.57]{KM98}), $(X,\frac{R_f}{q-1})$ is lc near $x$, a contradiction.

Assume now that $S\neq \emptyset$.
Then $\text{Nklt}(X, \frac{R_f}{q-1})$ is not connected.
By \cite[Theorem 1.2]{HH19}, $(X, \frac{R_f}{q-1})$ is plt, a contradiction.
\end{proof}

\begin{corollary}\label{cor-p2-ticurve}
Let $f:X\cong \mathbb{P}^2\to X$ be a $q$-polarized endomorphism.
Suppose there is a birational morphism $\pi:W\to X$ from a normal projective surface $W$ such that $\pi\circ h=f\circ \pi$ for some surjective endomorphism $h:W\to W$.
Suppose further either the exceptional locus of $W$ is reducible or $W$ has more than one negative curves.
Then $T_f\neq\emptyset$ and hence $(X, \frac{R_f}{q-1})$ is log Calabi-Yau after iteration.
\end{corollary}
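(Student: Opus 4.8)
The plan is to show that the hypotheses force the existence of an $f^{-1}$-periodic curve on $X\cong\mathbb{P}^2$, so that $T_f\neq\emptyset$ and Theorem \ref{thm-p2-ticurve} applies. First I would pass to an iterate so that $h$ and $f$ are simultaneously fully understood; in particular, after replacing $h$ by an iterate, the finitely many $h^{-1}$-periodic prime divisors are actually $h^{-1}$-invariant (using \cite[Corollary 3.8]{MZ20}), and likewise on $X$. The key point is that the exceptional locus $\Exc(\pi)$ of the birational morphism $\pi:W\to X$ is $h^{-1}$-invariant: since $\pi$ is $h$-$f$-equivariant and $f$ is an isomorphism near the (finitely many) points of $X$ that $\pi$ blows up — note $f$ polarized on $\mathbb{P}^2$ means $f$ is finite, but the indeterminacy/contracted structure is rigid — one checks that $h$ permutes the components of $\Exc(\pi)$, hence after a further iterate each component of $\Exc(\pi)$ is $h^{-1}$-invariant. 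Each such component is then an $h^{-1}$-periodic prime divisor on $W$, and more importantly each negative curve on $W$ is $h^{-1}$-periodic, because the set of negative curves is finite and preserved (up to permutation) by the endomorphism $h$ — pullback of a negative curve has negative self-intersection in its support, and by finiteness $h$ permutes them.

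Next I would analyze the two cases. If $\Exc(\pi)$ is reducible, it has at least two components $E_1, E_2$, each $h^{-1}$-periodic after iteration; their images $\pi(E_i)$ in $X\cong\mathbb{P}^2$ are points (if $\pi$ contracts $E_i$ to a point) or curves. If some $E_i$ is not contracted by $\pi$, its image is an $f^{-1}$-periodic curve and we are done. If all components of $\Exc(\pi)$ are contracted to points, then $W$ is obtained from $\mathbb{P}^2$ by blowing up at least two (infinitely near or distinct) points, and I would use that the strict transforms of lines through these points, or the structure of the Mori cone of $W$, produce a negative curve whose image in $\mathbb{P}^2$ is a line; this line is $f^{-1}$-periodic since the corresponding negative curve on $W$ is $h^{-1}$-periodic and $\pi$ is equivariant. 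The second case (more than one negative curve on $W$) is handled the same way: $\mathbb{P}^2$ itself has no negative curve, so if $W\to\mathbb{P}^2$ is birational and $W$ carries $\geq 2$ negative curves, at least one of them is not $\pi$-exceptional or maps to a curve in $\mathbb{P}^2$ — more carefully, since $\mathbb{P}^2$ has Picard number one and no curve of negative self-intersection, any negative curve on $W$ must lie in the exceptional locus, so "$\geq 2$ negative curves" already implies $\Exc(\pi)$ is reducible, reducing this to the first case.

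I would then conclude: in all cases we produce an $f^{-1}$-periodic prime divisor on $X$, so $T_f\neq\emptyset$, and Theorem \ref{thm-p2-ticurve} gives that $(X,\frac{R_f}{q-1})$ is log Calabi-Yau after iteration. The main obstacle I anticipate is the bookkeeping in the contracted case: showing rigorously that when all of $\Exc(\pi)$ maps to points, the equivariance still forces an $h^{-1}$-periodic negative curve on $W$ whose $\pi$-image is an honest curve (necessarily a line by \cite{Gur03}-type considerations, or directly by degree reasons) rather than just getting periodic exceptional curves that all contract. The cleanest route is probably to observe that $W$ has Picard number $\geq 2$, run the cone-theoretic argument to extract all $K_W$-negative extremal rays, note their supporting curves are permuted by $h$ hence $h^{-1}$-periodic after iteration, and then check that not all of them can be $\pi$-exceptional-to-a-point — one of them must give a divisorial contraction to a curve or be non-$\pi$-exceptional, yielding the desired periodic curve downstairs on $\mathbb{P}^2$.
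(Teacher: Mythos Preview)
Your overall strategy --- produce an $f^{-1}$-periodic curve on $X\cong\mathbb{P}^2$ and then invoke Theorem~\ref{thm-p2-ticurve} --- matches the paper's. However, the execution has two real gaps.

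First, your reduction of Case 2 rests on the false claim that ``any negative curve on $W$ must lie in the exceptional locus'': the strict transform of a line through two blown-up points is a $(-1)$-curve that is not $\pi$-exceptional. The correct dichotomy (which is what the paper uses) is: either some negative curve on $W$ is not $\pi$-exceptional, and then its image in $X$ is an $f^{-1}$-periodic curve and we are done; or every negative curve is $\pi$-exceptional, so $\ge 2$ negative curves forces $\Exc(\pi)$ to be reducible. This is easy to fix, but as written your reasoning is wrong.

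Second, and more seriously, your Case 1 is not settled. The subcase ``some $E_i$ is not contracted by $\pi$'' is vacuous, since components of $\Exc(\pi)$ are contracted by definition; so you are always in the situation you flag as the main obstacle. Your proposed remedy --- extract the $K_W$-negative extremal rays and argue ``not all of them can be $\pi$-exceptional-to-a-point'' --- does not close the gap: when the second extremal contraction of a $\rho=2$ model is a $\mathbb{P}^1$-fibration, the extremal ray is spanned by a fiber class, which is neither a negative curve nor (for a general fiber) $h^{-1}$-periodic, so there is nothing to push down. The paper handles this by running the $h$-equivariant relative MMP of $W$ over $X$ down to a model $W_1$ with $\rho(W_1)=2$ (using that $W$ is of Fano type, hence $\Q$-factorial, via Proposition~\ref{prop-bir-fanotype}), and then analyzing the second extremal contraction $p:W_1\to Y$. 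If $p$ is birational, its exceptional curve is $h^{-1}$-periodic and not contracted by $\tau:W_1\to X$, done. If $p$ is a $\mathbb{P}^1$-fibration over $Y\cong\mathbb{P}^1$, then the image in $Y$ of any $E_i$ contracted by $\sigma:W\to W_1$ is an $(h|_Y)^{-1}$-invariant \emph{point}, and the fiber of $p$ over that point is an $(h|_{W_1})^{-1}$-invariant curve which is not contracted by $\tau$. That fiber furnishes the required $f^{-1}$-periodic curve on $\mathbb{P}^2$. Your sketch does not contain this mechanism, and without it the argument is incomplete.
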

\begin{proof}
By Proposition \ref{prop-bir-fanotype}, $W$ is of Fano type and hence $\Q$-factorial (cf.~\cite[Proposition 4.11]{KM98}).
By \cite[Lemma 4.3]{MZ22}, the negative curves on $W$ are $h^{-1}$-periodic.
Note that the existence of negative curve on $W$ not contracted by $\pi$ will cause $T_f\neq \emptyset$.
So we may always assume that the exceptional locus of $W$ is reducible.

Let $E=\sum\limits_{i=1}^n E_i$ be the reduced $\pi$-exceptional divisor with $n>1$.
Write $K_W=\pi^*K_X+\sum\limits_{i=1}^n a_i E_i$ where $a_i>0$ since $X$ is smooth. 
By the negativity lemma, $K_W$ is not $\pi$-nef.
So we can run $h$-equivariant (after iteration) relative minimal model program of $W$ over $X$ which finally ends up with $X$.
Denote by $\tau:W_1\to X$ the last step and $\sigma:W\to W_1$ the composition of the previous steps.
Note that $W_1$ is of Fano type and $\rho(W_1)=2$.
Denote by $p:W_1\to Y$ be the contraction induced by the extremal ray different with $\tau$.
If $p$ is birational, then the exceptional curve of $p$ is not contracted by $\tau$ and hence $T_f\neq\emptyset$.
So we may assume that $p:W_1\to Y\cong \mathbb{P}^1$ is a $\mathbb{P}^1$-fibration.
Suppose that $E_1$ is contracted by $\sigma$.
Then $p(\sigma(E_1))$ is an $(h|_Y)^{-1}$-invariant point on $Y$ (cf.~\cite[Lemma 7.5]{CMZ20}).
Therefore, $p^{-1}(p(\sigma(E_1)))$ is an $(h|_{W_1})^{-1}$-invariant curve which is not contracted by $\tau$ and hence $T_f\neq\emptyset$.
Finally, we apply Theorem \ref{thm-p2-ticurve}.
\end{proof}

\section{Anti-canonical divisor}\label{sec-big}
Throughout this section, we use the following setting:

\begin{blank}\label{set-rel2}
Let $f:X\to X$ be a $q$-polarized endomorphism of a smooth projective threefold $X$ admitting an ($f$-equivariant) Fano contraction $\pi:X\to Y$ with $Y$ being an elliptic curve and general fibres being $\mathbb{P}^2$.
In this setting, $\rho(X)=2$ and we may further assume $f^*|_{\N^1(X)}=q \id$.
Denote by
$$\phi:X\dashrightarrow Z$$ 
the Chow reduction of the Iitaka fibration of $-K_X$ which is $f$-equivariant by \cite[Theorem 7.8]{MZ22}.
\end{blank}

\begin{proposition}\label{prop-semiample}
Suppose there is another fibration $\tau:X\to V$ different with $\pi$ with $0<\dim(V)<3$.
Then $-K_X$ is semi-ample.
In particular, $X$ is of Calabi-Yau type.
\end{proposition}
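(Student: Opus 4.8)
The plan is to locate the class of $-K_X$ on the boundary of the (two-dimensional) nef cone and then to push it down along $\tau$ to an ample class on $V$.

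First I would exploit that $\rho(X)=2$, so that $\NE(X)$ has exactly two extremal rays $R_1,R_2$. Since $\pi$ and $\tau$ are non-isomorphic fibrations, they must be the contractions of these two rays, say $\pi$ of $R_1$ and $\tau$ of $R_2$ (a fibration contracting $R_1$ would coincide with $\pi$, while one contracting a larger subcone of $\NE(X)$ would force $V$ to be a point). In particular $\rho(V)=1$, and both $R_1$ and $R_2$ are of fibre type, so the curves generating them sweep out $X$. By the effectivity of the anticanonical divisor for $\Q$-Gorenstein varieties carrying a polarized endomorphism recalled in the introduction, $-K_X$ is effective; hence for a general member $C$ of the covering family of curves in $R_i$ one has $C\not\subseteq\Supp(-K_X)$, so $-K_X\cdot R_i\ge 0$, and moreover $-K_X\cdot R_1>0$ because $\pi$ is a Fano contraction. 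Thus $-K_X$ is nef. If we also had $-K_X\cdot R_2>0$, then $-K_X$ would be positive on $\NE(X)\setminus\{0\}$, hence ample, so $X$ would be a Fano threefold; this is impossible since $\pi$ maps $X$ onto the positive-genus curve $Y$ whereas a Fano threefold is rationally connected. Therefore $-K_X\cdot R_2=0$: the canonical class $K_X$ is numerically trivial over $\tau$.

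Next I would descend along $\tau$. Since $\tau$ contracts the $K_X$-trivial extremal ray $R_2$ and $\dim X=3$, the canonical bundle formula for the relatively $K$-trivial fibration $\tau$ --- whose general fibre is a curve or a surface of Kodaira dimension $0$, equivalently the $\kappa=0$ case of relative abundance for threefolds --- yields $mK_X\sim\tau^*L$ for some integer $m>0$ and some line bundle $L$ on $V$; writing $L_V$ for the dual of $L$, we get $-mK_X=\tau^*L_V$. A curve $C$ generating $R_1$ is not contracted by $\tau$, so $\tau|_C$ is finite; from $\tau^*L_V\cdot C=-mK_X\cdot C>0$ we deduce that $L_V$ has positive degree on the image curve $\tau(C)\subset V$, and since $\rho(V)=1$ this forces $L_V$ to be ample. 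Hence $-mK_X=\tau^*L_V$, and with it $-K_X$, is semi-ample. (If one prefers to argue via the canonical bundle formula explicitly, one uses that $V$ is klt --- hence $\Q$-factorial, as $\dim V\le 2$ --- so that the discriminant and moduli parts are $\Q$-Cartier and $-(K_V+B_V+M_V)$ is an ample $\Q$-Cartier $\Q$-divisor with $-K_X\sim_{\Q}\tau^*\bigl(-(K_V+B_V+M_V)\bigr)$.)

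Finally, semi-ampleness of $-K_X$ finishes the proof: if $-mK_X\sim 0$ we take $\Delta=0$; otherwise a general member $D$ of the base-point-free linear system $|-mK_X|$ is smooth by Bertini's theorem, so $(X,\tfrac{1}{m}D)$ is a klt pair with $K_X+\tfrac{1}{m}D\sim_{\Q}0$, and $X$ is of Calabi-Yau type.

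I expect the only genuinely delicate step to be the descent of $-K_X$ along $\tau$: it rests on the $\kappa=0$ case of relative abundance for threefolds (equivalently, on the canonical bundle formula with nef, $\Q$-Cartier moduli part for the fibration $\tau$), and one must make sure the precise form needed --- for a fibration from a smooth threefold whose canonical class is numerically trivial over the base --- is available. All the remaining steps are routine bookkeeping with the two extremal rays of $\NE(X)$.
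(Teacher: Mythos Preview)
Your argument is correct and follows the same overall line as the paper: use $\rho(X)=2$ to see that $-K_X$ is nef, rule out ampleness via the elliptic base, and conclude that $-K_X$ lies on the $\tau$-edge of the nef cone. The one substantive difference is in how you pass from numerical triviality over $\tau$ to semi-ampleness. You invoke relative abundance (or the canonical bundle formula) for the $K$-trivial fibration $\tau$ to produce $mK_X\sim\tau^*L$; this is available in dimension three, as you note, but it is the heaviest ingredient in your proof. The paper sidesteps this entirely: once $-K_X\equiv\tau^*H$ with $H$ ample on $V$, it picks a general member $D\in|m\tau^*H|$ so that $(X,\tfrac{1}{m}D)$ is klt by Bertini, and then applies Gongyo's abundance for numerically trivial lc pairs (already cited in the introduction) to get $K_X+\tfrac{1}{m}D\sim_{\Q}0$, hence $-K_X\sim_{\Q}\tau^*H$. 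This avoids any appeal to relative abundance or to the canonical bundle formula, and keeps the argument self-contained with tools the paper has already set up. Your version is fine, but the paper's trick is worth knowing: whenever a divisor is numerically a pullback of something ample, Bertini plus log abundance upgrades $\equiv$ to $\sim_{\Q}$ for free.
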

\begin{proof}
By the assumption of the setting \ref{set-rel2}, $\rho(V)=1$ because $\rho(X)=2$ and $-K_X$ is not ample because otherwise $X$ is Fano and hence rationally connected contradicting with the setting of a Fano fibration over an elliptic curve.
Since $\pi$ and $\tau$ are two different fibrations to positive lower dimensional varieties, the two extremal rays of $\Nef(X)$ are generated by the pullbacks of ample divisors on $Y$ and $V$ which are not big.
Hence, $\Nef(X)=\PE^1(X)$.
Note that $-K_X$ is not ample but effective and $\pi$-ample.
So $-K_X\equiv \phi^*H$ for some ample $\Q$-divisor $H$ on $V$.
By Bertini's theorem, we may assume $(X, \phi^*H)$ is lc (even terminal) after a suitable choice of $H$.
By the abundance, $-K_X\sim_{\Q} \phi^*H$ and we are done.
\end{proof}

\begin{theorem}\label{thm-kappa1}
Assume further $\kappa(X, -K_X)<3$, i.e., $-K_X$ is not big.
Then $X$ is of Calabi-Yau type.
\end{theorem}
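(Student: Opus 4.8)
The plan is to reduce the statement, via the Iitaka fibration of $-K_X$, either to the case $\kappa(X,-K_X)=0$ — which is Proposition \ref{prop--kkappa0} — or to the presence on $X$ of a second fibration to which the known cases of Gongyo's Conjecture \ref{main-conj-lc} and the canonical bundle formula apply. Since $X$ is smooth, hence $\Q$-Gorenstein, $-K_X$ is effective by \cite{MZ19}, and by hypothesis it is not big, so $\kappa(X,-K_X)\in\{0,1,2\}$. If $\kappa(X,-K_X)=0$ then $-K_X\sim_{\Q}T_f$ and $(X,T_f)$ is log canonical by Proposition \ref{prop--kkappa0}, so $X$ is of Calabi--Yau type. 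From now on I assume $\kappa(X,-K_X)\in\{1,2\}$ and let $\phi\colon X\dashrightarrow Z$ be the $f$-equivariant Chow reduction of the Iitaka fibration of $-K_X$ from \ref{set-rel2}, so that $\dim Z=\kappa(X,-K_X)$ and $Z$ carries an induced polarized endomorphism. The key geometric input is that $\phi$ does not contract the general fibre $F\cong\mathbb{P}^2$ of $\pi$: since $-K_X|_F=-K_{\mathbb{P}^2}$ is ample, $\kappa(F,-K_X|_F)=2\neq 0$, so $F$ does not lie in a fibre of $\phi$.

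I would then split according to whether the general fibre $G$ of $\phi$ is contracted by $\pi$. If $\pi|_G$ is non-constant, so that $G$ dominates $Y$: using $f^*|_{\N^1(X)}=q\,\id$ one finds that the numerical class of (the movable part of) $-K_X$ lies in the subspace $\phi^*\N^1(Z)\subset\N^1(X)$, which is one-dimensional precisely because $G$ dominates $Y$, hence is proportional to $\phi^*(\text{ample})$; and since $\pi$ has rationally connected fibres one has $\Pic^0(X)=\pi^*\Pic^0(Y)$, so a non-zero numerically trivial divisor restricts to a non-torsion divisor on $G$ (the pullback $\Pic^0(Y)\hookrightarrow\Pic^0(G)$ being injective) and would kill all pluri-sections of $-K_X$; this forces $-K_X\sim_{\Q}\phi^*(\text{ample }\Q\text{-divisor})$, so $-K_X$ is semi-ample (after an equivariant resolution of $\phi$, and this descends to $X$), and $X$ is of Calabi--Yau type by Bertini applied to $|-mK_X|$. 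If instead $\pi|_G$ is constant, then $\pi$ factors through $\phi$; since $\phi$ does not contract $\pi$-fibres this forces $\dim Z=2$ together with a fibration $\psi\colon Z\to Y$, so that an equivariant resolution $\bar Z$ of $Z$ is a smooth projective surface dominating the elliptic curve $Y$ and carrying a polarized endomorphism, whence $\bar Z\not\cong\mathbb{P}^2$.

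In this last case I would pass to an $f$-equivariant resolution $\mu\colon\bar X\to X$ resolving $\phi$, so that $\bar\phi\colon\bar X\to\bar Z$ is a morphism with general fibre a curve, and apply the canonical bundle formula as in Proposition \ref{prop-cbf-rf}. By \ref{set-rel2} and Proposition \ref{prop-induction-birational} one has $K_{\bar X}+\tfrac{R_{\bar f}}{q-1}=\mu^*(K_X+\tfrac{R_f}{q-1})\equiv 0$; Gongyo's conjecture holds trivially on the general (curve) fibre, since a boundary of coefficients $\le 1$ on a curve is log canonical, so $(\bar X,\tfrac{R_{\bar f}}{q-1})$ is log canonical over the generic point of $\bar Z$; and \cite[Theorem 4.1.1]{Fuj04} gives $K_{\bar X}+\tfrac{R_{\bar f}}{q-1}=\bar\phi^*(K_{\bar Z}+B+M)\equiv 0$ with $B\ge 0$ the discriminant and $M$ the moduli part, hence $K_{\bar Z}+B+M\equiv 0$. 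Since $\bar Z$ is a smooth surface with a polarized endomorphism and $\bar Z\not\cong\mathbb{P}^2$, Theorem \ref{thm-surf} shows $(\bar Z,\tfrac{R_{f_{\bar Z}}}{q-1})$, hence $(\bar Z,B+M)$, is log canonical, and the two-dimensional canonical bundle formula — where the moduli part is well controlled — propagates this back to $\bar X$. In every case $(\bar X,\tfrac{R_{\bar f}}{q-1})$ is log canonical with $K_{\bar X}+\tfrac{R_{\bar f}}{q-1}\equiv 0$, hence $\sim_{\Q}0$ by abundance, so $\bar X$ — and, by Lemma \ref{lem-cy-bir}, also $X$ — is of Calabi--Yau type.

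The step I expect to be the main obstacle is the passage from log canonicity of $(\bar X,\tfrac{R_{\bar f}}{q-1})$ over the generic point of $\bar Z$ to log canonicity on all of $\bar X$, i.e.\ the control of the discriminant divisor $B$ and the moduli part $M$ over the multiple and degenerate fibres of $\bar\phi$. The mechanism I would rely on is that, $\phi$ being equivariant, these degenerate fibres lie over periodic points of $\bar Z$ and $\Supp B$ is periodic; combined with $K_{\bar Z}+B+M\equiv 0$ and the log canonicity of the base pair (Theorem \ref{thm-surf}), this should pin down $(\bar Z,B+M)$, and therefore $(\bar X,\tfrac{R_{\bar f}}{q-1})$, as log canonical. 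A secondary, routine but delicate, point is that $\phi$ need not already be a morphism on $X$ itself, so the resolution must be carried out $f$-equivariantly and one must verify that the various pullbacks and pushforwards of the boundary divisors preserve the relevant positivity and singularity properties — this is precisely where Proposition \ref{prop-induction-birational}, Lemma \ref{lem-cy-bir} and the normalisations of \ref{set-rel2} are used.
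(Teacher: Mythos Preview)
Your overall plan --- reduce to $\kappa(X,-K_X)=0$ or manufacture a second fibration via the Iitaka map --- is the right instinct, but the execution in your case (b) has a genuine gap, and it is not the one you flagged as ``the main obstacle.'' The step
\[
(\bar Z,\tfrac{R_{f_{\bar Z}}}{q-1})\text{ lc}\ \Longrightarrow\ (\bar Z,B+M)\text{ lc}\ \Longrightarrow\ (\bar X,\tfrac{R_{\bar f}}{q-1})\text{ lc}
\]
fails at both arrows. For the first, $B+M$ and $\tfrac{R_{f_{\bar Z}}}{q-1}$ are only \emph{numerically} equivalent (both $\equiv -K_{\bar Z}$); there is no a priori inequality $B+M\le \tfrac{R_{f_{\bar Z}}}{q-1}$ of divisors, so log canonicity of one pair says nothing about the other. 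For the second, the canonical bundle formula transfers information from the total space to the base, not back: knowing $(\bar Z,B+M)$ is lc does \emph{not} force $(\bar X,\tfrac{R_{\bar f}}{q-1})$ to be lc over the bad fibres. Proposition~\ref{prop-cbf-rf} works only because $K_Y\equiv 0$ kills $B$ outright; over a surface base with $K_{\bar Z}\not\equiv 0$ you have no such mechanism. A secondary but real issue is that you repeatedly invoke $f$-equivariant \emph{smooth} resolutions of $X$ and of $Z$; for non-invertible endomorphisms these are not available in general, and Proposition~\ref{prop-cbf-rf} genuinely needs the total space smooth.

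The paper sidesteps both problems by never running the canonical bundle formula over $Z$. Instead it takes the normalization $W$ of the graph of $\phi$ (automatically $f$-equivariant, no smoothness needed), observes that every exceptional component of $\sigma\colon W\to X$ is $h^{-1}$-periodic and dominates the elliptic curve $Y$, and restricts to a general $h$-invariant fibre $W_y\to X_y\cong\mathbb{P}^2$. If this restriction has reducible exceptional locus (or $W_y$ has more than one negative curve), Corollary~\ref{cor-p2-ticurve} produces $T_{f|_{X_y}}\neq\emptyset$, whence Theorem~\ref{thm-p2-ticurve} and Proposition~\ref{prop-cbf-rf} --- over the $K$-trivial base $Y$, where $B=0$ is forced --- finish. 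If not, $W$ is $\Q$-factorial with a single exceptional divisor, and one repeats the analysis with $-K_W$: either its Iitaka map is a morphism (giving $-K_W$ semi-ample when $\dim V\le 1$, or, when $\dim V=2$, a genuine second morphism $X\to V_1$ via the rigidity lemma, so Proposition~\ref{prop-semiample} applies), or one again lands in the reducible-exceptional-locus case. The upshot: the Iitaka fibration is used only to manufacture periodic curves on the $\mathbb{P}^2$ fibres or a second morphism on $X$ itself; the canonical bundle formula is applied exclusively along $\pi$.
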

\begin{proof}
By Proposition \ref{prop--kkappa0}, we may assume $\kappa(X, -K_X)>0$.
By Proposition \ref{prop-semiample}, we may assume $\phi$ is not well-defined.
Let $W$ be the normalization of the graph of $\phi$. 
Denote by $h:W\to W$ the lifting of $f$ and $\sigma:W\to X$ the induced birational morphism.
Since $X$ is smooth, the exceptional locus of $\sigma$ is of pure codimension one in $W$ (cf.~\cite[Corollary 2.63]{KM98}).
We denote it by $E:=\sum\limits_{i=1}^n E_i$ with $n>0$.
After iteration, we may assume $E_i$ is $h^{-1}$-invariant.
Note that the elliptic curve $Y$ admits no $g^{-1}$-periodic points.
So $\pi(\sigma(E_i))=Y$.

Let $W_y:=(\pi\circ \sigma)^{-1}(y)$ be an $h$-invariant general fibre after iteration (cf. \cite[Theorem 5.1]{Fak03}).
Denote by $X_y:=\pi^{-1}(y)\cong \mathbb{P}^2$.
If $E$ is reducible, i.e., $n>1$, then $E\cap W_y$ is also reducible.
By Corollary \ref{cor-p2-ticurve}, $(X_y, \frac{R_{f|_{X_y}}}{q-1})$ is log Calabi-Yau after iteration of $f$.
By Proposition \ref{prop-cbf-rf}, $(X, \frac{R_f}{q-1})$ is log Calabi-Yau after iteration of $f$.
So we may assume $E$ is irreducible.
Note that $E$ is $\Q$-Cartier (cf.~\cite[Lemma 2.62]{KM98}).
Then $W$ is $\Q$-factorial.
By \cite[Theorem 1.3]{Zha13}, $(W,E)$ is lc.
Since $W\backslash E$ is smooth, $W$ is klt.

Write $K_W=\sigma^*K_X+aE$ with $a>0$ since $X$ is smooth.
Then $$0\le \kappa(W, -K_W)\le \kappa(W, -\sigma^*K_X)=\kappa(X, -K_X)$$
where the first inequality follows from Theorem \ref{thm-move} and the last equality follows from \cite[Theorem 5.13]{Uen75}.
Let $\phi_W:W\dashrightarrow V$ be the Chow reduction of the Iitaka fibration of $-K_W$.
So $\dim(V)<3$.
Suppose $\phi_W$ is not well-defined.
Let $\widetilde{W}$ be the normalization of the graph of $\phi_W$ and $\widetilde{\sigma}:\widetilde{W}\to W$ the induced birational morphism.
Since $W$ is $\Q$-factorial, the exceptional locus of $\widetilde{\sigma}$ is of pure codimension one in $\widetilde{W}$ (cf.~\cite[Corollary 2.63]{KM98}) and each irreducible component dominates $Y$ by a similar argument.
In particular, the restricted birational morphism $\widetilde{W}_y\to X_y$ has reducible exceptional locus.
Then we are done by Corollary \ref{cor-p2-ticurve} and Proposition \ref{prop-cbf-rf}.
So we may assume that $\phi_W$ is well-defined, $\rho(W_y)=2$ and $W_y$ has only one negative curve $C=E\cap W_y$, i.e., the exceptional curve of $W_y\to X_y$. 
We give more infomation on $W_y$.
By Proposition \ref{prop-bir-fanotype} and \cite[Proposition 4.11]{KM98}, $W_y$ is of Fano type and $\Q$-factorial.
Since $W_y$ has only one negative curve (which is also $K_{W_y}$-negative since $X$ is smooth), another contraction of $W_y$ is a $\mathbb{P}^1$-fibration over $\mathbb{P}^1$.
In particular, $-K_{W_y}$ is ample.

If $\dim(V)=0$, then $(W,T_h)$ is log Calabi-Yau by Proposition \ref{prop--kkappa0}.
If $\dim(V)=1$, then $\phi_W$ is just the Iitaka fibration of $-K_W$ and hence $-K_W$ is semi-ample.
In both cases, $W$ and hence $X$ are of Calabi-Yau type by Lemma \ref{lem-cy-bir}.

We are left with $\dim(V)=2$. 
Note that $-K_W|_{W_y}=-K_{W_y}$ is ample.
So the restriction $\phi_W|_{W_y}:W_y\to V$ is surjective.
We first assume that the induced map $X_y\dashrightarrow V$ is not well-defined.
By the rigidity lemma (cf.~\cite[Lemma 1.15]{Deb01}), $\phi_W|_{W_y}$ contracts no curves. 
Then $\phi_W|_{W_y}:W_y\to V$ is finite surjective.
By \cite[Lemma 5.16 and Proposition 5.20]{KM98}, $V$ is klt and $\Q$-factorial.
Let $C_V:=\phi_W|_{W_y}(C)$.
By the projection formula, $C_V$ is also a negative curve.
Then, we have an $h|_V$-equivariant divisorial contraction $V\to V_1$ which contracts $C_V$.
By the rigidity lemma again, the induced rational map $X_y\dashrightarrow V_1$ is then well-defined.
If the map $X_y\dashrightarrow V$ is already well-defined, then we simply identify $V_1$ with $V$.
Consider the induced $f$-equivariant dominant rational map $\phi':X\dashrightarrow V_1$.
Note that indeterminant locus of $\phi'$ is $f^{-1}$-invariant which does not dominate $Y$ since $\phi'$ is well-defined near $X_y$.
By \cite[Lemma 7.5]{CMZ20}, $\phi'$ is well-defined and we are done by Proposition \ref{prop-semiample}.
\end{proof}

\section{Proof of Theorem \ref{main-thm-lcy}}

We refer to \cite{Ati57} for well-known facts on vector bundles over elliptic curves.
By $\mathcal{F}_n$ we mean the unique indecomposable rank $n$ vector bundle with non-trivial global sections over an elliptic curve.
The following lemmas will be used later.

\begin{lemma}\label{lem-f2}
Let $S\cong \mathbb{P}_Y(\mathcal{F}_2)$ where $Y$ is an elliptic curve and $\mathcal{F}_2$ is the unique indecomposable rank 2 vector bundle with non-trivial global sections.
Then $S$ admits no polarized endomorphism.
\end{lemma}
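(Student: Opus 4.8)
The plan is to assume, for contradiction, that $S$ admits a polarized endomorphism $f$, to prove that then $\kappa(S,-K_S)=0$, and to derive a contradiction from Proposition~\ref{prop--kkappa0}.

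First I would recall the geometry of $S=\PP_Y(\mathcal{F}_2)$. Since $\mathcal{F}_2$ sits in a non-split extension $0\to\SO_Y\to\mathcal{F}_2\to\SO_Y\to 0$, one has $\det\mathcal{F}_2=\SO_Y$ and $h^0(\mathcal{F}_2)=1$; hence $|\SO_S(1)|$ consists of a single divisor $C_0$, which is a section of $\pi$ with $C_0^2=\deg\mathcal{F}_2=0$. The relative canonical bundle formula $\omega_S=\SO_S(-2)\otimes\pi^*(\omega_Y\otimes\det\mathcal{F}_2)$, together with $\omega_Y=\SO_Y$, then gives $-K_S\sim\SO_S(2)\sim 2C_0$.

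Next I would compute $\kappa(S,-K_S)$. For $m\ge 0$ we have $\pi_*\SO_S(m)=\Sym^m\mathcal{F}_2$ and $R^1\pi_*\SO_S(m)=0$, so $h^0(\SO_S(-mK_S))=h^0(\SO_S(2m))=h^0(Y,\Sym^{2m}\mathcal{F}_2)$. By Atiyah's classification of bundles on an elliptic curve, $\Sym^n\mathcal{F}_2\cong\mathcal{F}_{n+1}$ and $h^0(\mathcal{F}_{n+1})=1$ (alternatively, each $\Sym^n\mathcal{F}_2$ is an iterated self-extension of $\SO_Y$, so its space of global sections is one-dimensional). Hence $h^0(\SO_S(-mK_S))=1$ for every $m\ge 1$, so $\kappa(S,-K_S)=0$, and moreover $2mC_0$ is the unique effective divisor in $|\SO_S(2m)|=|-mK_S|$.

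Finally, since $S$ is smooth (hence $\Q$-Gorenstein) and $\kappa(S,-K_S)=0$, Proposition~\ref{prop--kkappa0} applies and yields $-K_S\sim_{\Q}T_f$ with $(S,T_f)$ lc. But $\kappa(S,-K_S)=0$ forces the only effective $\Q$-divisor $\Q$-linearly equivalent to $-K_S$ to be $2C_0$ (clear denominators and use that $2mC_0$ is the unique member of $|\SO_S(2m)|$), so $T_f=2C_0$. This is absurd: $S$ is smooth and $C_0$ is a smooth prime divisor appearing with coefficient $2>1$, so $(S,2C_0)$ is not lc (equivalently, $T_f$ is reduced by definition whereas $2C_0$ is not). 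Therefore $S$ admits no polarized endomorphism. The only step requiring input beyond the machinery already developed is the computation $\kappa(S,-K_S)=0$, which rests on Atiyah's description of the symmetric powers of $\mathcal{F}_2$; the remainder is a direct application of Proposition~\ref{prop--kkappa0}.
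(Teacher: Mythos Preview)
Your proof is correct and follows essentially the same approach as the paper: compute $\kappa(S,-K_S)=0$ via $\pi_*\SO_S(n)\cong\Sym^n\mathcal{F}_2\cong\mathcal{F}_{n+1}$, apply Proposition~\ref{prop--kkappa0} to get $-K_S\sim_\Q T_f$, and then obtain a contradiction from the fact that $T_f$ is reduced while the only effective $\Q$-divisor in the class is $2C_0$. The paper phrases the final contradiction as ``$T_f\neq 2C$ forces $\kappa(S,C)>0$'', which is the contrapositive of your formulation, but the content is identical.
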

\begin{proof}
Suppose the contrary that there is a $q$-polarized endomorphism $f:S\to S$.
We may assume $f^*|_{\N^1(S)}=q\id$.
Note that $\pi:S\to Y$ has a unique section $C$ with $C^2=0$ and $-K_S\sim 2C$.
Then $\mathcal{O}_S(1)\cong \mathcal{O}_S(C)$ and $\pi_*\mathcal{O}_S(n)\cong \Sym^n(\mathcal{F}_2)\cong \mathcal{F}_{n+1}$ for $n>0$ (cf.~\cite[Theorem 9]{Ati57}).
In particular, $h^0(S,nC)=h^0(Y,\mathcal{F}_{n+1})=1$ and $\kappa(S,-K_S)=0$.
By Proposition \ref{prop--kkappa0}, we have $\Supp R_f=T_f$ and $-K_S\sim_{\Q} T_f$ .
Note that $T_f$ is reduced.
So $T_f\neq 2C$ and hence $\kappa(S, C)>0$, a contradiction.
\end{proof}

\begin{proof}[Proof of Theorem \ref{main-thm-lcy}]
{\bf Step 1.} 
In this step, we reduce our situation to the case when $T_f=\emptyset$, $f^*|_{\N^1(X)}=q\id$ and $X\cong \mathbb{P}_Y(\mathcal{E})$ for some rank 3 vector bundle $\mathcal{E}$ on an elliptic curve $Y$.

By \cite[Theorem 1.8]{MZ18}, we run $f$-equivariant minimal model program (after iteration)
$$X=X_1\dashrightarrow X_2\dashrightarrow\cdots\dashrightarrow X_r\to Y$$
where $Y$ is a $Q$-abelian variety and $g:=f|_Y$ which is still polarized.
Moreover, each composition $X_i\dashrightarrow Y$ is a well-defined equi-dimensional morphism with all the fibres irreducible and rationally connected.
Denote by $\pi:X\to Y$ be the induced composition.
If $\dim(Y)=3$, then $X=Y$ is $Q$-abelian and hence $(X,\frac{R_f}{q-1})=(X,0)$ is log Calabi-Yau.
If $\dim(Y)=0$, then $X$ is of Fano type by Theorem \ref{thm-yos}.

Suppose $\dim(Y)=2$.
By Theorem \ref{thm-move}, we have $-(K_X+T_f)\sim_{\Q} P$ for some effective $\Q$-divisor $P$ with coefficients $\le 1$ and containing no component of $T_f$.
Let $F$ be a general fibre of $\pi$.
Since $F$ is general, we may assume that $\Supp P \cup T_f$ intersects with $F$ transversally.
In particular, $(F, (T_f+P)|_F)$ is lc and $(X,T_f+P)$ is lc over the generic point of $\pi$.
By the canonical bundle formula \cite[Theorem 4.1.1]{Fuj04},
we have 
$$0\equiv K_X+T_f+P=\pi^*(K_Y+B+M)$$
Note that $B$ is effective and $M$ is pseudo-effective and $K_Y\equiv 0$.
So $B=0$ and hence $(X, T_f+P)$ is lc and $X$ is of Calabi-Yau type.

Therefore, we may assume that $\dim(Y)=1$.
Then $f^*|_{\N^1(X)}=q\id$ after iteration by \cite[Theorem 1.8]{MZ18}.
Note that the general fibre $F$ of $\pi$ is a smooth rational surface.
If $\rho(F)>1$, then we may apply  Theorems \ref{thm-surf}  to  Proposition \ref{prop-cbf-rf} and hence $X$ is log Calabi-Yau.
So we may assume $\rho(F)=1$ and hence $F\cong \mathbb{P}^2$.
Since $X$ is smooth, the first step $X_1\dashrightarrow X_2$ is either a divisorial contraction of a Fano contraction by \cite[Theorem 3.3]{Mor82}.
If it is divisorial, then the exceptional divisor $E$ is disjoint with the general fibre $F$ since now $F\cong \mathbb{P}^2$ and hence $\pi(E)$ is a point in $Y$.
However, after iteration, $f^{-1}(E)=E$ and hence $g^{-1}(\pi(E))=\pi(E)$ by \cite[Lemma 7.5]{CMZ20}, a contradiction by \cite[Lemma 4.7]{MZ18}.
So $X_1\dashrightarrow X_2$ is a Fano contraction over $Y$.
Again because $F\cong \mathbb{P}^2$, this is possible only when $X_2=Y$.
In particular, $\rho(X)=2$ and $\pi$ is a Fano contraction.
By \cite[Theorem 3.5]{Mor82} and since the Brauer group of the elliptic curve $Y$ is trivial, $X\cong \mathbb{P}_Y(\mathcal{E})$ for some rank 3 vector bundle $\mathcal{E}$ on $Y$.

Suppose $f^{-1}(P)=P$ for some prime divisor $P$. Then $P$ dominates $Y$ since $f|_Y$ is \'etale and polarized.
Let $F$ be an $f$-invariant (after iteration) fibre of $\pi$.
Then $D\cap F\subseteq T_{f|_F}$.
By Theorem \ref{thm-p2-ticurve} and Proposition \ref{prop-cbf-rf}, $(X, \frac{R_f}{q-1})$ is log  Calabi-Yau.
So we may assume $T_f=\emptyset$.

{\bf Step 2.} 
In this step, we reduce our situation to the case when $\mathcal{E}\cong \mathcal{F}_2\oplus \mathcal{L}$ for some line bundle $\mathcal{L}$.

Note that $f|_Y$ is polarized and has Zariski dense periodic points.
So after iteration and choosing some fixed point as the identity element of $Y$, we may assume $f|_Y$ is an isogeny.
In particular, $f|_Y$ commutes with any multiplication map.
By Lemma \ref{lem-cy-qe}, we can always replace $X$ by base change of multiplication map (by 3) on $Y$. 
So we may assume $3| \deg \mathcal{E}$.
By Theorem \ref{thm-kappa1}, we may further assume $-K_X$ is big.
Consider the following two cases.

Suppose $\mathcal{E}\cong \mathcal{L}_1\oplus \mathcal{L}_2\oplus \mathcal{L}_3$ where $\mathcal{L}_1\cong \mathcal{O}_Y$.
Let $D_1$ be the divisor determined by the projection $\mathcal{E}\to \mathcal{L}_2\oplus \mathcal{L}_3$ and define $D_2$ and $D_3$ similarly.
Consider the exact sequnce
$$0\to \mathcal{O}_X(1)\otimes\mathcal{O}_X(-D_1)\to \mathcal{O}_X(1)\to \mathcal{O}_X(1)\otimes \mathcal{O}_{D_1}\to 0.$$
Taking $\pi_*$, we have
$$0\to \pi_*(\mathcal{O}_X(1)\otimes\mathcal{O}_X(-D_1))\to \mathcal{E}\to \pi_*\mathcal{O}_{D_1}(1)=\mathcal{L}_2\oplus \mathcal{L}_3\to 0$$
with $0$ on the right because $R^1\pi_*(\mathcal{O}_X(1)\otimes\mathcal{O}_X(-D_1))=0$ since all the fibres are $\mathbb{P}^2$.
By the projection formula, we have $\mathcal{O}_X(1)\cong \mathcal{O}_X(D_1)$ and similarly, $D_1\sim D_2+\pi^*c_1(\mathcal{L}_2)$ and $D_1\sim D_3+\pi^*c_1(\mathcal{L}_3)$.
So by the relative Euler sequence, we have
$$-K_X\sim 3D_1-\pi^*(c_1(\mathcal{E}))=D_1+D_2+D_3.$$
It is easy to see that $D_1+D_2+D_3$ has simple normal crossing.
In particular, $(X,D_1+D_2+D_3)$ is a log Calabi-Yau pair.

Suppose $\mathcal{E}$ is indecomposable. Since $3| \deg \mathcal{E}$, we have $\mathcal{E}\cong \mathcal{F}_3\otimes \mathcal{L}=0$ for some line bundle $\mathcal{L}$ (cf.~\cite[Theorem 5]{Ati57}).
Therefore, we may assume $\mathcal{E}\cong \mathcal{F}_3$.
However, $-K_X$ is then nef but not big, a contradiction with our assumption.

Now we may assume $\mathcal{E}\cong \mathcal{F}\oplus \mathcal{L}$ where $\mathcal{F}$ is an indecomposable rank 2 vector bundle and $\mathcal{L}$ is a line bundle.
Note that we may also assume that $\mathcal{F}$ does not split after taking pullback of the multiplication map (by 2) of $Y$.
So we may assume that $\mathcal{F}\cong \mathcal{F}_2$.

{\bf Step 3.} 
In this step, we show that $\deg \mathcal{L}<0$.
Note that there is a non-splitting exact sequence
$$0\to \mathcal{O}_Y\to \mathcal{E}\to  \mathcal{O}_Y\oplus \mathcal{L}\to 0$$
which is induced by the non-splitting exact sequence of $\mathcal{F}_2$.
Let $D$ be the divisor determined by $\mathcal{E}\to  \mathcal{O}_Y\oplus \mathcal{L}$.
Then $\mathcal{O}_X(1)\cong \mathcal{O}_X(D)$.
Let $D'$ be the divisor determined by the natrual projection $\mathcal{E}\to \mathcal{F}_2$.
Then $D\sim D'+\pi^*c_1(\mathcal{L})$ and we have
$$-K_X\sim 3D-\pi^*c_1(\mathcal{L})\sim 2D+D'.$$
Note that $D|_D\sim C$ where $C$ is the section curve determined by $\mathcal{O}_Y\oplus \mathcal{L}\to \mathcal{L}$.
Then we see that $-K_X$ is big if and only if $\deg \mathcal{L}\neq 0$.
By Lemma \ref{lem-f2}, $D'$ is not $f$-periodic.
Note that $f^*D'\equiv qD'$ and $f^{-1}(D')\neq D'$.
So $D'|_{D'}$ is pseudo-effective and hence nef on $D'$.
Then $D'$ is nef.
Note that $-K_X$ is not nef because otherwise $X$ is of Fano type.
So $D$ is not nef and hence $\deg \mathcal{L}<0$.

{\bf Step 4.} 
In this step, we show that $\kappa(X,D)=0$.
Note that 
$$\pi_*(\mathcal{O}_X(n))\cong \Sym^n(\mathcal{E})\cong \bigoplus_{i=0}^n \mathcal{F}_{i+1}\otimes \mathcal{L}^{\otimes (n-i)}$$
for $n\ge 0$.
Then 
$$h^0(X, nD)=\sum_{i=0}^n h^0(Y,\Sym^i(\mathcal{F}_2)\otimes \mathcal{L}^{\otimes (n-i)})=h^0(Y,\mathcal{F}_{n+1})=1$$
by noticing that $h^0(Y,\mathcal{F}_{i+1}\otimes \mathcal{L}^{\otimes (n-i)})=0$ for any $i<n$.
So $\kappa(X,D)=0$.

{\bf End of the proof.}
By Proposition \ref{prop-kappa0}, $D$ is $f^{-1}$-periodic.
However, this contradicts the reduction we obtained in Step 1.
\end{proof}

\begin{remark}
When $X\cong \mathbb{P}_Y(\mathcal{F}_2\oplus\mathcal{L})$ for some line bundle $\mathcal{L}$ of negative degree,
we do not know whether $X$ is of Calabi-Yau type or not. 
What we proved above is that either $X$ is of Calabi-Yau type or $X$ admits no polarized endomorphism.
Therefore, it will be very interesting to study the following question. 

\end{remark}

\begin{question}
Let $X\cong \mathbb{P}_{Y}(\mathcal{E})$ be a projective bundle over an elliptic curve $Y$. 
\begin{enumerate}
\item When will $X$ be of Calabi-Yau type?
\item When will $X$ admit a polarized endomorphism?
\end{enumerate}
\end{question}

\end{document}